\newtheorem{theorem}{Theorem}[section]
\newtheorem{observation}[theorem]{Observation}
\newtheorem{conjecture}[theorem]{Conjecture}
\newtheorem{corollary}[theorem]{Corollary}
\newtheorem{lemma}[theorem]{Lemma}
\newtheorem{claim}{Claim}
\newfont{\Bb}{msbm10 scaled\magstep1}
\newcommand{\Cay}{\mbox{Cay}}
\begin{document}

\title{Homomorphisms of binary Cayley graphs}%

\author[LIMOS]{Laurent Beaudou}
\author[LRI]{Reza Naserasr}
\author[RMC]{Claude Tardif}

\address[LIMOS]{CNRS, LIMOS, UMR6158, Univ. Clermont-Ferrand 2, Aubi\`{e}re -- France}
\address[LRI]{CNRS, LRI, UMR8623, Univ. Paris-Sud 11, 
F-91405 Orsay Cedex -- France}
\address[RMC]{Coll\`{e}ge Militaire Royal du Canada, Kingston, Ontario -- Canada}

\begin{abstract}
  A binary Cayley graph is a Cayley graph based on a binary group. In
  1982, Payan proved that any non-bipartite binary Cayley graph must
  contain a generalized Mycielski graph of an odd-cycle, implying that such a 
  graph cannot have chromatic number 3.

  We strengthen this result first by proving that any non-bipartite
  binary Cayley graph must contain a projective cube as a subgraph. We
  further conjecture that any homomorphism of a non-bipartite binary
  Cayley graph to a projective cube must be surjective and we prove some 
  special case of this conjecture.
\end{abstract}

\begin{keyword}
Cayley graph \sep homomorphism \sep projective cube
\end{keyword}

\maketitle

\section{Introduction}

For classic notation we will follow that of \cite{GodsilRoyle}.  A
\emph{binary Cayley} graph is a Cayley graph $\Cay(\Gamma, \Omega)$
where $\Gamma$ is a binary group (i.e., $x+x=0$ for any element $x$),
and $\Omega$ is any subset of $\Gamma$ (normally not including element
$0$). The vertices of the graph are the elements of $\Gamma$, and two
vertices $u$ and $v$ are adjacent if and only if $u-v \in
\Omega$. Thus $\Cay(\Gamma, \Omega)$ is a simple graph when element
$0$ is not in $\Omega$. Hypercubes are the most famous examples of
binary Cayley graphs. In fact, for this reason, binary Cayley graphs
often are referred to as \emph{cube-like graphs}.


Other examples of binary Cayley graphs, which are essential for this
work, are the \emph{projective cubes}. A projective cube of dimension
$d$, denoted $\mathcal{PC}_d$, is defined as the Cayley graph
$\Cay(\mathbb{Z}_2^d, \{e_1,e_2,\cdots, e_d, J\})$ where $(e_1, e_2,
\ldots, e_d)$ is the canonical basis and $J$ is the all-1
vector. Projective cube of dimension $d$ can be built from hypercube
of dimension $d+1$ by identifying antipodal vertices. From this fact
comes their name. It can also be built, equivalently, from the
hypercube of dimension $d$ by adding edges between antipodal pairs of
vertices. This satisfies the Cayley graph definition given here.  In
some literature they are also referred to as \emph{folded cubes}.
Projective cubes are studied for their highly symmetric structures.
Homomorphisms to projective cubes capture some important packing and
edge-coloring problems, see~\cite{N07, NRS13}.

A graph $G$ is a {\em core} if it does not admit a homomorphism to a
proper subgraph of itself.

In this work we show the importance of projective cubes in the study
of homomorphisms of Cayley graphs on binary groups. Among other
properties, we will need the following results:

\begin{theorem}[Naserasr 2007 \cite{N07}]
 The projective cube of dimension $2k-1$ is bipartite. 
 Projective cube of dimension $2k$ is of odd girth $2k+1$. Furthermore,
 any pair of vertices of $\mathcal{PC}_{2k}$ is in a common cycle of length 
$2k+1$. 
\end{theorem}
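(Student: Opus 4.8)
The plan is to exploit throughout the standard correspondence between closed walks in a Cayley graph and multisets of generators summing to the identity. Fix $d$ and write a closed walk based at $0$ in $\mathcal{PC}_d$ as a sequence of generators $g_1,\dots,g_\ell$ with $g_1+\dots+g_\ell=0$ in $\mathbb{Z}_2^d$. Let $t$ be the number of indices $i$ with $g_i=J$, and for $1\le j\le d$ let $a_j$ be the number of indices $i$ with $g_i=e_j$, so that $\ell=t+\sum_{j}a_j$. Since $J=e_1+\dots+e_d$, the walk closes up exactly when $\sum_j (t+a_j)e_j=0$, i.e. when $a_j\equiv t\pmod 2$ for every $j$. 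In that case $\ell=t+\sum_j a_j\equiv t(1+d)\pmod 2$. First I would read the two parity statements straight off this congruence: if $d=2k-1$ then $1+d=2k$ is even, so every closed walk has even length and $\mathcal{PC}_{2k-1}$ is bipartite; if $d=2k$ then $1+d$ is odd, so an odd closed walk forces $t$ odd, hence $t\ge 1$ and every $a_j$ is odd and therefore $\ge 1$, giving $\ell=t+\sum_j a_j\ge 1+d=2k+1$. Since a shortest odd closed walk is a cycle, the odd girth of $\mathcal{PC}_{2k}$ is at least $2k+1$.

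To finish the odd-girth claim and to obtain the ``furthermore'' part at the same time, I would prove the single structural fact that \emph{every} ordering of the $d+1$ generators $e_1,\dots,e_d,J$ traces out a cycle of length $d+1=2k+1$ in $\mathcal{PC}_{2k}$. The sequence is automatically closed because $e_1+\dots+e_d+J=0$, so the only thing to check is that its $2k+1$ partial sums are pairwise distinct. If two of them coincided, some nonempty proper sub-interval of consecutive generators would sum to $0$; but the only subsets of $\{e_1,\dots,e_d,J\}$ summing to $0$ are the empty set and the whole set (the basis vectors are independent, and adding $J$ to a subset of them flips to the complementary sum), and the whole set has $2k+1$ elements, too many to sit inside a proper sub-interval. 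Hence the partial sums are distinct and we get a genuine $(2k+1)$-cycle, so the odd girth is exactly $2k+1$.

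For the last assertion, let $u,v$ be any two vertices. A Cayley graph is vertex-transitive, so translating by $u$ reduces to the case $u=0$; then $v\ne 0$ has Hamming weight $m$ with $1\le m\le 2k$. Ordering the generators so that the $m$ basis vectors $e_j$ with $v_j=1$ come first, the $m$-th partial sum is exactly $v$, so the resulting $(2k+1)$-cycle passes through both $0$ and $v$; translating back yields a common $(2k+1)$-cycle through $u$ and $v$.

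The one point that is not pure parity bookkeeping is the structural fact that arbitrary orderings of $e_1,\dots,e_d,J$ produce cycles rather than merely closed walks — that is, the zero-sum subset computation — and that is where I would focus the care, presumably isolating it as a short separate observation so the rest of the argument stays clean. Everything else (the vertex-transitivity reduction, the parity congruences, placing the support of $v$ at the front) is routine.
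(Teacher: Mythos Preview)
The paper does not supply a proof of this theorem: it is quoted from \cite{N07} as background, so there is no in-paper argument to compare against. What can be assessed is whether your argument stands on its own, and it does.

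Your parity bookkeeping is correct: writing a closed walk as a multiset of generators with $t$ copies of $J$ and $a_j$ copies of $e_j$, the closure condition $a_j\equiv t\pmod 2$ for all $j$ gives $\ell\equiv t(1+d)\pmod 2$, which immediately yields bipartiteness for $d=2k-1$ and the lower bound $2k+1$ on the odd girth for $d=2k$. The reduction ``a shortest odd closed walk is a cycle'' is the standard splitting argument and is fine here.

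The structural observation you single out --- that the only subsets of $\{e_1,\dots,e_{2k},J\}$ summing to $0$ are $\emptyset$ and the full set --- is exactly the right isolated lemma, and your justification (independence of the $e_j$, and $J+\sum_{j\in S}e_j=\sum_{j\notin S}e_j$) is complete. It gives both the existence of $(2k+1)$-cycles and, via the ordering trick and vertex-transitivity, the ``furthermore'' clause. One cosmetic remark: your phrasing ``$1\le m\le 2k$'' silently uses that any nonzero $v\in\mathbb{Z}_2^{2k}$ has weight in that range, which of course it does; you might also note that placing $J$ first would handle $v=J$ in a single step, though your version already covers it with $m=2k$.

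In short: the paper offers no proof to benchmark against, and your self-contained argument is correct and cleanly organized.
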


\begin{corollary}\label{ProjectiveCubeCORE}
 The projective cube of dimension $2k$ is a core.
\end{corollary}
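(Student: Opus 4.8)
The plan is to prove the equivalent statement that every endomorphism $f\colon\mathcal{PC}_{2k}\to\mathcal{PC}_{2k}$ is an automorphism; since $\mathcal{PC}_{2k}$ is finite, this is exactly what it means for it to be a core. Throughout, write $g=2k+1$ for the odd girth of $\mathcal{PC}_{2k}$ furnished by the preceding theorem.

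First I would record a general remark about graphs of odd girth $g$: the image of a $g$-cycle under any homomorphism is again a $g$-cycle. Such an image is a closed walk $W$ of length $g$; if $W$ were not a cycle it would contain an internal repeated vertex and hence decompose into two nontrivial closed walks whose lengths $a,b$ satisfy $a+b=g$, so one of them, say $a$, is odd with $a<g$. But a closed walk of odd length $a<g$ contains an odd cycle of length at most $a<g$, which is impossible in a graph of odd girth $g$. Hence $W$ is a cycle, of length exactly $g$, and $f$ restricts to a bijection from the original $g$-cycle onto $W$. In particular, $f$ is injective on the vertex set of every $g$-cycle of $\mathcal{PC}_{2k}$.

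The key step is to promote this to global injectivity, and here I would invoke the last sentence of the preceding theorem: any two distinct vertices $u,v$ of $\mathcal{PC}_{2k}$ lie on a common cycle of length $g$, on which $f$ is injective, so $f(u)\ne f(v)$. Thus $f$ is injective, hence a bijection of the finite vertex set. It then remains to check that a vertex-injective endomorphism of a finite graph is an automorphism: vertex-injectivity makes the induced map $uv\mapsto f(u)f(v)$ an injection of $E(\mathcal{PC}_{2k})$ into itself, hence a bijection, and the surjectivity of this edge map shows that $f(u)f(v)\in E(\mathcal{PC}_{2k})$ forces $uv\in E(\mathcal{PC}_{2k})$; so $f^{-1}$ is also a homomorphism and $f$ is an automorphism. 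Consequently $\mathcal{PC}_{2k}$ admits no homomorphism onto a proper subgraph, i.e.\ it is a core.

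I do not expect a serious obstacle: essentially all of the content is packaged in the preceding theorem, and in particular in its assertion that every pair of vertices of $\mathcal{PC}_{2k}$ lies on a common shortest odd cycle, which is what makes the injectivity step go through. The one point requiring a little care is the final step, since a bijective graph homomorphism need not be an isomorphism in general; but vertex-injectivity together with finiteness forces the induced edge map to be bijective as well, and that closes the argument.
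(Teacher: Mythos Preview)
Your argument is correct and is exactly the intended deduction: the paper states the result as an immediate corollary of the preceding theorem without writing out a proof, and the details you supply---that a homomorphism must be injective on every shortest odd cycle, and that the ``every pair of vertices lies on a common $(2k+1)$-cycle'' clause then forces global injectivity, hence an automorphism---are precisely what the corollary is meant to encode.
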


In \cite{Payan98}, Payan proved a surprising result that there is no
binary Cayley graph of chromatic number 3. His proof was an
implication of the following stronger result based on the following
definition.  Let $G$ be a graph on vertices $v^0_1, v^0_2, \ldots,
v^0_n$. The $k$-th level Mycielski graph of $G$, denoted $M^k(G)$, is
built from $G$ by adding vertices $v^1_1, v^1_2, \ldots,v^1_n$,
$v^2_1, v^2_2, \ldots, v^2_n$ up to $v^k_1, v^k_2, \ldots, v^k_n$
where if $v^0_i$ is adjacent to $v^0_j$, then $v^r_i$ is also adjacent
to $v^{r-1}_j$, finally we add one more vertex $w$ which is is joined
to all vertices $v^k_i$.  We will use the following result of
Stiebitz, see~\cite{Matousek2003} for a proof.

\begin{lemma}[Siebitz 1985 \cite{Stiebitz85}]
Let $C$ be an odd-cycle.
Then for any $i$, $\chi(M^i(C)) = 4$. 
\end{lemma}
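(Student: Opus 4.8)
The bound $\chi(M^i(C))\le 4$ is easy. Fix a proper $3$-colouring $c_0$ of the odd cycle $C$, colour $v^r_j$ by $c_0(v^0_j)$ for every level $r\in\{0,1,\dots,i\}$ and every $j$, and colour $w$ by a fourth colour; if $v^r_a$ is adjacent to $v^{r-1}_b$ then $v^0_a$ is adjacent to $v^0_b$ in $C$ and the two colours differ, while $w$ meets only the vertices of the top level, all coloured by $c_0$. So the content is the inequality $\chi(M^i(C))\ge 4$, that is, that $M^i(C)$ has no homomorphism to $K_3$.

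I would prove this by the topological method --- the route taken in \cite{Matousek2003}. A $3$-colouring of a graph $G$ is the same as a homomorphism $G\to K_3$, which induces a $\mathbb{Z}_2$-map $\operatorname{Hom}(K_2,G)\to\operatorname{Hom}(K_2,K_3)$, and $\operatorname{Hom}(K_2,K_3)$ --- a hexagon carrying the antipodal action --- is $\mathbb{Z}_2$-homotopy equivalent to $S^1$. Since by the Borsuk--Ulam theorem there is no $\mathbb{Z}_2$-map $S^2\to S^1$, it suffices to produce a $\mathbb{Z}_2$-map $S^2\to\operatorname{Hom}(K_2,M^i(C))$, i.e.\ to show that this complex has $\mathbb{Z}_2$-coindex at least $2$.

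For $i=1$ this is Csorba's suspension theorem $\operatorname{Hom}(K_2,M^1(G))\simeq_{\mathbb{Z}_2}\Sigma\operatorname{Hom}(K_2,G)$ applied to $\operatorname{Hom}(K_2,C_{2k+1})\simeq_{\mathbb{Z}_2}S^1$, which gives $\operatorname{Hom}(K_2,M^1(C))\simeq_{\mathbb{Z}_2}S^2$. The real difficulty, and the step I expect to be the main obstacle, is $i\ge 2$: the generalized Mycielskian no longer suspends this complex (for instance $M^2(K_2)$ is just $C_7$, so $\operatorname{Hom}(K_2,M^2(K_2))\simeq_{\mathbb{Z}_2}S^1\not\simeq_{\mathbb{Z}_2}S^2$), so one genuinely needs that $C$ is an odd cycle. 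The plan is an induction on $i$ analysing the neighbourhood complex $N(M^i(C))$ through its natural cover by levels: $N(w)$ spans the full simplex on the top level, the remaining vertices span simplices knitting consecutive levels together, and the bottom level carries the odd cycle $C$, whose neighbourhood complex is a circle. A nerve / Mayer--Vietoris argument on this cover should show that the cones introduced by the Mycielski shift kill $\pi_1$ from the first level upward, so that $N(M^i(C))$ is simply connected, while the $2$-sphere that is created when the circle $N(C)$ is first capped off survives all the way to the top, so that the $\mathbb{Z}_2$-coindex stays equal to $2$ for every $i$ and never collapses. With the Borsuk--Ulam reduction above this gives $\chi(M^i(C))\ge 4$, hence $\chi(M^i(C))=4$.

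If one wished to avoid topology, the natural combinatorial attempt is a direct induction on $i$: show that in every proper $3$-colouring of the stacked cycle $M^i(C)\setminus w$ the top level must use all three colours, so that $w$ cannot be coloured; the base case is that an odd cycle is not $2$-colourable. The snag is that the obvious inductive statement --- ``if level $i-1$ uses all three colours, so does level $i$'' --- is false, since a level can be forced by its neighbours into using only two colours. One therefore needs the right strengthened invariant that holds at level $0$ and really does propagate upward, and finding it is where the work lies; that invariant is, in effect, a combinatorial transcription of the topological computation sketched above.
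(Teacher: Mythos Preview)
The paper does not prove this lemma at all: it is quoted as a result of Stiebitz, and the reader is pointed to \cite{Matousek2003} for a proof. So there is no ``paper's own proof'' to compare against; the relevant benchmark is the topological argument in Matou\v{s}ek's book, and your proposal is indeed aimed in that direction.

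That said, what you have written is a plan, not a proof. The upper bound and the $i=1$ case are fine, and you correctly identify that Csorba's suspension theorem does not handle $i\ge 2$. But your treatment of the crucial step --- that the neighbourhood complex (or $\operatorname{Hom}(K_2,-)$) of $M^i(C)$ has $\mathbb{Z}_2$-coindex at least $2$ for every $i$ --- consists of the phrases ``should show'' and ``should \ldots survive all the way to the top'', with no actual computation. A nerve or Mayer--Vietoris argument on the level filtration can be made to work, but it requires you to write down the cover, check the intersections, and verify simple connectivity explicitly; none of that is here. Your final paragraph likewise describes the shape of a combinatorial argument and its obstacle without supplying the invariant that resolves it. As it stands the proposal correctly locates the difficulty and names the right tools, but the step that carries all the content is asserted rather than proved.
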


Payan proved the following stronger statement:

\begin{theorem}[Payan 1998 \cite{Payan98}] 
\label{Payan}
 Given a binary Cayley graph $\Cay(\Gamma, \Omega)$ of odd-girth
 $2k+1$, the $k$-th level Mycielski graph $M^k(C_{2k+1})$ is a
 subgraph of $\Cay(\Gamma, \Omega)$.
\end{theorem}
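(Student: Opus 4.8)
The plan is to write down an explicit model of $M^k(C_{2k+1})$ inside $\Cay(\Gamma,\Omega)$. Since $x+x=0$ for every $x\in\Gamma$, the group $\Gamma$ is elementary abelian, so all the sums below are well defined. As the odd girth is $2k+1$, fix a cycle of length $2k+1$ in $\Cay(\Gamma,\Omega)$ on vertices $v_0,v_1,\dots,v_{2k}$ with $v_iv_{i+1}$ an edge for every $i$ (indices read modulo $2k+1$ throughout), and set $\omega_i:=v_i+v_{i+1}\in\Omega$. This cycle plays the role of the level-$0$ copy of $C_{2k+1}$, i.e.\ $v^0_j:=v_j$. For $1\le r\le k$ I would define the level-$r$ vertices to be the \emph{centred windows}
\[
  v^r_j \ :=\ \sum_{t=j-r}^{j+r} v_t \qquad(\text{a sum of }2r+1\text{ consecutive cycle vertices}).
\]
The motivation is that in any binary Cayley graph, if $a\sim b\sim c$ then $a+b+c$ is a common neighbour of $a$ and $c$; here $v^1_j=v_{j-1}+v_j+v_{j+1}$ is precisely that fourth corner for the path $v_{j-1}v_jv_{j+1}$, and the higher levels iterate the trick.

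The core of the argument is a telescoping identity. For $r\ge1$ the windows of $v^r_j$ and of $v^{r-1}_{j+1}$ coincide except in two positions, so all but two of the $v_t$ cancel and $v^r_j+v^{r-1}_{j+1}=v_{j-r}+v_{j-r+1}=\omega_{j-r}\in\Omega$; symmetrically $v^r_j+v^{r-1}_{j-1}=v_{j+r-1}+v_{j+r}=\omega_{j+r-1}\in\Omega$. Thus $v^r_j$ is adjacent in $\Cay(\Gamma,\Omega)$ both to $v^{r-1}_{j-1}$ and to $v^{r-1}_{j+1}$, which are exactly the Mycielski edges joining level $r$ to level $r-1$; and the cycle edges $v^0_jv^0_{j+1}$ are realized by the generators $\omega_j$.

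It remains to attach the apex $w$. For $r=k$ the window $\{j-k,\dots,j+k\}$ exhausts $\mathbb{Z}_{2k+1}$, so $v^k_j=\sum_{t=0}^{2k} v_t=:V$ does not depend on $j$; taking $w:=V+\omega_0$ gives $w+v^k_j=\omega_0\in\Omega$, so $w$ is adjacent to every $v^k_j$. Sending each $v^r_j$ and $w$ to the group elements just named therefore maps every edge of $M^k(C_{2k+1})$ to an edge of $\Cay(\Gamma,\Omega)$, i.e.\ is a homomorphism $M^k(C_{2k+1})\to\Cay(\Gamma,\Omega)$. Since a homomorphism does not decrease the chromatic number and $\chi(M^k(C_{2k+1}))=4$ by Stiebitz's lemma, this gives $\chi(\Cay(\Gamma,\Omega))\ge 4$, recovering Payan's corollary that no binary Cayley graph has chromatic number $3$.

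The only real difficulty I anticipate is finding the window formula for $v^r_j$: once it is on the table the adjacency checks are one line of telescoping, and the full wrap-around at level $k$ --- which collapses the whole top level to the single vertex $V$ --- hands us the apex for free. That same collapse shows one cannot hope for a vertex-injective copy in \emph{every} binary Cayley graph of odd girth $2k+1$ (for instance $\Cay(\mathbb{Z}_2^2,\{e_1,e_2,e_1+e_2\})=K_4$ has odd girth $3$ but only four vertices), so the statement is to be read in the homomorphism order, which is precisely what Payan's corollary requires.
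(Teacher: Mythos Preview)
Your construction is correct and is essentially Payan's own argument: the telescoping identities $v^r_j+v^{r-1}_{j\pm1}\in\Omega$ hold exactly as you compute, and the collapse $v^k_j=\sum_t v_t=:V$ at the top level is real. The paper does not supply a proof of this theorem (it is cited from Payan); the only related construction it sketches is the special case $\Cay(\Gamma,\Omega)=\mathcal{PC}_{2k}$, where in the poset picture one sets $v^r_j$ to be the common part of the consecutive Kneser vertices $A_{j-1},A_{j+1}$ and iterates. Under the identification of $\mathbb{Z}_2^{2k+1}$ with subsets of $[2k+1]$ this is the same ``centred window'' idea as yours.

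Your caveat about ``subgraph'' versus ``homomorphism'' is well taken and honest: the $K_4$ example shows that an injective copy of $M^k(C_{2k+1})$ cannot exist in every binary Cayley graph of odd girth $2k+1$, and indeed in your construction not only does level $k$ collapse to $V$, but your chosen apex $w=V+\omega_0=\sum_{t=2}^{2k}v_t$ coincides with $v^{k-1}_{k+1}$. What your map does give, with $V$ itself playing the role of the apex, is simultaneously a homomorphism from $M^{k-1}(C_{2k+1})$; whether \emph{that} one is always injective is a separate question, but either version already yields $\chi(\Cay(\Gamma,\Omega))\ge 4$ via Stiebitz, which is exactly what Payan's corollary needs.
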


This in particular implies that the projective cube of dimension $2k$
contains the graph $M^{k}(C_{2k+1})$ as a subgraph. This fact is also
implied from the following view of the projective cubes.

First, recall that for any pair of integer $n,k$ with $k < n$, the
graph $K(n,k)$ is the {\em Kneser graph} of $k$ among $n$. Its vertex
set is made by the $n \choose k$ subsets of $[1 \cdot n]$ of size $k$,
two of them being adjacent if they are disjoint.

Now, for an integer $k$, and a set $\mathcal A$ of size
$2k+1$. Vertices of $\mathcal{PC}_{2k}$ can be regarded as the partitions $(A,
\bar A)$ of $\mathcal A$. We always assume $A$ is the smaller
part. Two such vertices $(A, \bar A)$ and $(B, \Bar B)$ are adjacent
if either $A$ or $\bar A$ is obtained from $B$ by adding one more
element. This implies that the subgraph induced by vertices $(A, \bar
A)$ with $|A|=k$ is isomorphic to the Kneser graph $K(2k+1, k)$. To
find $M^k(C_{2k+1})$ in this graph, just take $v^0_1, v^0_2, \ldots
v^0_{2k+1}$ to be a $2k+1$-cycle in this Kneser graph. Call
$(A_i,\bar{A_i})$ the partition associated with $v^0_i$. Then for each $j$,
$A_{j-1}$ and $A_{j+1}$ (indices are taken modulo $2k+1$) have
exactly $k-1$ elements in common. Let $A^1_j$ be this subset and
define $v^1_j$ to be $(A^1_j \bar A^1_j)$. Continuing by induction each
pair $v^i_{j-1}$ and $v^i_{j+1}$ of vertices define a unique set of
size $i-1$ which defines $v^{i+1}_{j}$ with the last vertex being
$(\emptyset, \mathcal{A})$.

In Section \ref{sec:power}, we strengthen the result of Payan proving
that:

\begin{theorem}\label{PCasSubgraph}
 Given a binary Cayley graph $\Cay(\Gamma, \Omega)$ of odd-girth
 $2k+1$, the projective cube $\mathcal{PC}_{2k}$ is a subgraph of
 $(\Gamma, \Omega)$.
\end{theorem}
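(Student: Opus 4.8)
My plan is to realize $\mathcal{PC}_{2k}$ directly as the homomorphic image of $\mathbb{Z}_2^{2k+1}$ under a map read off a shortest odd cycle. First I would fix a shortest odd cycle $C$ in $\Cay(\Gamma,\Omega)$ and record the sequence $\omega_1,\dots,\omega_{2k+1}$ of elements of $\Omega$ encountered along it; by construction $\omega_1+\cdots+\omega_{2k+1}=0$ and the partial sums $0,\omega_1,\omega_1+\omega_2,\dots$ are pairwise distinct. Then I would introduce the group homomorphism $\tilde\psi\colon\mathbb{Z}_2^{2k+1}\to\Gamma$ sending $e_i\mapsto\omega_i$. Since $\sum_i\omega_i=0$, the all-$1$ vector $J_{2k+1}$ lies in $\ker\tilde\psi$, so $\tilde\psi$ factors through a homomorphism $\psi$ on $\mathbb{Z}_2^{2k+1}/\langle J_{2k+1}\rangle$. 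Using the ``identify antipodal vertices of $Q_{2k+1}$'' description recalled above, $\Cay(\mathbb{Z}_2^{2k+1}/\langle J_{2k+1}\rangle,\{\bar e_1,\dots,\bar e_{2k+1}\})\cong\mathcal{PC}_{2k}$, and since $\tilde\psi$ carries every generator $e_i$ to an element of $\Omega$, the induced map $\psi$ carries every edge of $\mathcal{PC}_{2k}$ to an edge of $\Cay(\Gamma,\Omega)$. So $\psi$ is already a graph homomorphism $\mathcal{PC}_{2k}\to\Cay(\Gamma,\Omega)$, and it exhibits $\mathcal{PC}_{2k}$ as a subgraph the moment I can show $\psi$ is injective, i.e.\ that $\ker\tilde\psi=\langle J_{2k+1}\rangle$.

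The one substantive step is therefore proving that no proper nonempty subset $S\subsetneq\{1,\dots,2k+1\}$ has $\sum_{i\in S}\omega_i=0$. If such an $S$ has odd cardinality, applying the elements $\omega_i$ ($i\in S$) in any order from the vertex $0$ traces a closed walk of odd length $|S|<2k+1$; as every odd closed walk contains an odd cycle, this contradicts the odd-girth being $2k+1$ (the sub-case $|S|=1$ being impossible, since it would force some $\omega_i=0\notin\Omega$). If $S$ has even cardinality, its complement $\bar S=\{1,\dots,2k+1\}\setminus S$ is nonempty of odd cardinality $2k+1-|S|<2k+1$ and satisfies $\sum_{i\in\bar S}\omega_i=\sum_i\omega_i+\sum_{i\in S}\omega_i=0$ (recall we are in a binary group), so the odd case applies to $\bar S$. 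Either way we reach a contradiction, hence $\ker\tilde\psi=\langle J_{2k+1}\rangle$, $\psi$ is injective, and $\mathcal{PC}_{2k}\subseteq\Cay(\Gamma,\Omega)$.

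I expect the only delicate point to be the bookkeeping around ``reading a cycle as a word in $\Omega$'': the $\omega_i$ need not be distinct elements of $\Omega$ — only their partial sums must be distinct — so one cannot treat $\{\omega_1,\dots,\omega_{2k+1}\}$ as a set of size $2k+1$. Fortunately this never bites, since both the complement trick and the ``odd closed walk contains an odd cycle'' fact are blind to repetitions among the $\omega_i$. The other routine check is the asserted isomorphism against the paper's coordinate convention: passing to the representative of a class with last coordinate $0$ identifies $\bar e_1,\dots,\bar e_{2k}$ with $e_1,\dots,e_{2k}$ and $\bar e_{2k+1}$ with the all-$1$ vector $J$ of $\mathbb{Z}_2^{2k}$, matching $\mathcal{PC}_{2k}=\Cay(\mathbb{Z}_2^{2k},\{e_1,\dots,e_{2k},J\})$.
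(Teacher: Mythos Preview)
Your argument is correct and, at its core, coincides with the paper's: both extend a homomorphism from a shortest odd cycle $C_{2k+1}\to\Cay(\Gamma,\Omega)$ to a group homomorphism from a free $\mathbb{Z}_2$-module, observe that this yields a graph homomorphism $\mathcal{PC}_{2k}\to\Cay(\Gamma,\Omega)$, and then use the odd-girth hypothesis to rule out identifications. The packaging differs, though. The paper routes the first step through the general machinery of Lemma~\ref{1} (the power graph $\widehat{G}$), indexing generators by the \emph{vertices} of $C_{2k+1}$ and noting that $\widehat{C_{2k+1}}$ splits as two copies of $\mathcal{PC}_{2k}$; you instead index generators by the \emph{edges} (the $\omega_i$) and pass to the quotient by $J_{2k+1}$ directly, which is the same picture after a change of basis. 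For injectivity the paper appeals to the stated fact that any two vertices of $\mathcal{PC}_{2k}$ lie on a common $(2k+1)$-cycle, whereas you compute the kernel by hand via the ``odd closed walk contains an odd cycle'' trick and the complement argument. Your version is thus more self-contained (it needs neither Lemma~\ref{1} nor the cited structural property of $\mathcal{PC}_{2k}$), while the paper's version highlights that the result is a specialization of a general pseudo-duality principle; each is a clean proof, and your anticipated worry about repeated $\omega_i$ is indeed harmless for exactly the reason you give.
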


Since a $k$-coloring of a graph $G$ is equivalent to a homomorphism of
$G$ to $K_k$, the corollary of Payan's theorem can be restated as
follows:

\begin{theorem}[Payan 1998 \cite{Payan98}]
  \label{PayanCorollary}
 If a non-bipartite binary Cayley graph admits a homomorphism to
 $K_4$, then any such homomorphism must be a surjective mapping.
\end{theorem}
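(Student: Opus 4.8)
The plan is to deduce this immediately from Theorem~\ref{Payan} together with Stiebitz's lemma. Let $G=\Cay(\Gamma,\Omega)$ be non-bipartite and suppose $\phi\colon G\to K_4$ is a homomorphism that is not surjective. Then the image of $\phi$ spans at most three vertices of $K_4$, hence lies inside a triangle; so $\phi$ is in fact a homomorphism from $G$ to $K_3$, that is, a proper $3$-colouring of $G$, and therefore $\chi(G)\le 3$.

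On the other hand, since $G$ is non-bipartite it has a finite odd-girth $2k+1$ for some $k\ge 1$. By Theorem~\ref{Payan}, the graph $M^k(C_{2k+1})$ is a subgraph of $G$, and by Stiebitz's lemma $\chi\bigl(M^k(C_{2k+1})\bigr)=4$. Monotonicity of the chromatic number under taking subgraphs then gives $\chi(G)\ge 4$, contradicting $\chi(G)\le 3$. Hence no non-surjective homomorphism $G\to K_4$ can exist, i.e. every homomorphism of a non-bipartite binary Cayley graph to $K_4$ is onto.

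There is essentially no real obstacle here: the whole combinatorial content of the statement is already carried by Payan's theorem and Stiebitz's lemma, both recalled above. The only point worth spelling out — and it is entirely routine — is the elementary observation that a non-surjective homomorphism to $K_4$ has image contained in a triangle, so that it amounts to a $3$-colouring of the source graph; once this is noted, the two chromatic-number bounds collide and the result follows.
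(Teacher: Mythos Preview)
Your proof is correct and matches the paper's own reasoning: the paper presents this theorem as a restatement, via the equivalence of $k$-colourings and homomorphisms to $K_k$, of the fact that no non-bipartite binary Cayley graph has chromatic number $3$, which in turn is derived from Theorem~\ref{Payan} together with Stiebitz's lemma. The only thing you add is the explicit (and routine) observation that a non-surjective map to $K_4$ factors through $K_3$, which the paper leaves implicit.
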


Considering the fact that $K_4$ is isomorphic to $\mathcal{PC}_{2}$,
we introduce the following conjecture in generalization of
Theorem~\ref{PayanCorollary}.

\begin{conjecture}
  \label{MappingToPC2k}
 If a non-bipartite binary Cayley graph admits a homomorphism to
 $\mathcal{PC}_{2k}$, then any such homomorphism must be an onto
 mapping.
\end{conjecture}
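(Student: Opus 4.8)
We sketch the approach we would take toward Conjecture~\ref{MappingToPC2k}, and identify where we expect it to stall.

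First I would record two preliminary facts. Let $G=\Cay(\Gamma,\Omega)$ be non-bipartite --- which we may assume connected --- and let $\phi\colon G\to\mathcal{PC}_{2k}$ be a homomorphism. Since the odd girth of the domain of a homomorphism is at least that of its image, and $\mathcal{PC}_{2k}$ has odd girth $2k+1$ by Naserasr's theorem recalled above, the odd girth of $G$ equals $2k'+1$ for some $k'\ge k$. Moreover, for each edge $uv$ of $G$ the difference $\phi(u)-\phi(v)$ is one of the $2k+1$ generators $e_1,\dots,e_{2k},J$ of $\mathcal{PC}_{2k}$, and telescoping along paths shows that $\phi(V(G))$ lies in a single coset of the subgroup generated by those generators that actually occur as such differences; as any $2k$ of the $2k+1$ vectors $e_1,\dots,e_{2k},J$ are independent over $\mathbb{Z}_2$, if some generator failed to occur then the image of $\phi$ would lie inside a subgraph of $\mathcal{PC}_{2k}$ isomorphic to a hypercube, hence bipartite, contradicting the choice of $G$. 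So all $2k+1$ edge-types of $\mathcal{PC}_{2k}$ occur on $G$; but (as a triangle inside $K_4=\mathcal{PC}_2$ shows) this alone does not force $\phi$ onto, and surjectivity on vertices is the real issue.

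The plan is then to split on the value of $k'$. If $k'=k$, Theorem~\ref{PCasSubgraph} supplies a copy of $\mathcal{PC}_{2k}$ inside $G$, and restricting $\phi$ to it yields an endomorphism of $\mathcal{PC}_{2k}$, which by Corollary~\ref{ProjectiveCubeCORE} is an automorphism and in particular onto; hence $\phi$ is onto. This is the special case we can establish --- the conjecture holds whenever $G$ has the least odd girth compatible with admitting a homomorphism to $\mathcal{PC}_{2k}$. If $k'>k$, the same theorem still places a copy of $\mathcal{PC}_{2k'}$ inside $G$, and restricting $\phi$ to it produces a homomorphism $\psi\colon\mathcal{PC}_{2k'}\to\mathcal{PC}_{2k}$ whose image lies in that of $\phi$; it therefore suffices to prove that every such $\psi$ is onto. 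In other words, Conjecture~\ref{MappingToPC2k} reduces to the assertion that for all $m>k\ge 1$ every homomorphism $\mathcal{PC}_{2m}\to\mathcal{PC}_{2k}$ is surjective.

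The hard part will be this last assertion: since the source and target are now non-isomorphic, core-ness no longer applies. The route I would try uses the partition description from the introduction --- view the vertices of $\mathcal{PC}_{2m}$ and of $\mathcal{PC}_{2k}$ as near-balanced bipartitions of a $(2m+1)$-set and of a $(2k+1)$-set, track the image under $\psi$ of a shortest odd cycle of $\mathcal{PC}_{2m}$ together with the nested sets of the Mycielski subgraph inside it, and then use the fact (again from Naserasr's theorem) that any two vertices of $\mathcal{PC}_{2k}$ lie on a common $(2k+1)$-cycle to propagate reachability across all of $\mathcal{PC}_{2k}$. An alternative is the edge-colouring reformulation: a homomorphism to $\mathcal{PC}_{2k}$ is a colouring of $E(G)$ by $2k+1$ colours in which, around every cycle, all colours occur with the same parity, and one would aim to show that a $\mathcal{PC}_{2m}$ coloured this way must realise all $2^{2k}$ partial colour-sums at its vertices. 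I do not see how to complete either argument in general, which is why only the case $k'=k$ is settled here.
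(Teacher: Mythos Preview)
Your reduction is exactly the one the paper carries out: using Theorem~\ref{PCasSubgraph} to find a copy of $\mathcal{PC}_{2k'}$ inside $G$ (where $2k'+1$ is the odd girth of $G$), you reduce Conjecture~\ref{MappingToPC2k} to the statement that every homomorphism $\mathcal{PC}_{2r}\to\mathcal{PC}_{2k}$ with $r\ge k$ is onto, which is precisely the paper's Conjecture~\ref{MappingAmongPC2k}. Your treatment of the case $k'=k$ via Corollary~\ref{ProjectiveCubeCORE} also matches the paper. (Your preliminary observation that all $2k+1$ edge-types of $\mathcal{PC}_{2k}$ must occur is correct and pleasant, but the paper does not use it, and as you yourself note it does not yield vertex-surjectivity.)

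Where the paper goes further than your proposal is in the special cases of the reduced conjecture. You settle only $r=k$; the paper also records that $k=1$ follows from Payan's theorem (via the Mycielski subgraph $M^r(C_{2r+1})\subseteq\mathcal{PC}_{2r}$ having chromatic number~$4$), and then proves the new case $r=3$, $k=2$ by an explicit and fairly intricate case analysis on a putative non-surjective homomorphism $\mathcal{PC}_6\to\mathcal{PC}_4$, controlling the sizes of fibres $f^{-1}(x)$ and exploiting the poset description of both projective cubes. The two heuristic routes you sketch for the general case (propagating via the Mycielski/partition structure, or the parity edge-colouring reformulation) are reasonable but are not the methods the paper uses for its special case, and indeed the paper leaves the general reduced statement open as well.
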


In Section \ref{sec:bintopc}, we reduce this conjecture to properties
of homomorphisms among Projective cubes only. Then we prove a special
case.


\section{Power graphs and pseudo-duality}
\label{sec:power}

Given a set $A$, the {\em power set} of $A$ is the set of all subsets
of $A$. It is denoted by ${\cal P }(A)$. This set forms a binary group
together with the operation of \emph{symmetric difference}. In fact it
is isomorphic to $(\mathbb{Z}_{2}^{|A|},+)$, each subset being represented
by its characteristic vector.

For a graph $G$, let $\widehat{G}$ denote the Cayley graph $\Cay(
{\cal P}(V(G)), E(G))$. This is the graph whose vertices are the subsets
of vertices of $G$ where two vertices are adjacent if their symmetric
difference is an edge of $G$.  It is worth noting that $E(G)$ is the
smallest Cayley subset which makes the natural injection of $G$ into
$\widehat{G}$ a homomorphism. Recall that a homomorphism is an edge
preserving mapping of vertices.

The graph $P_n$ is the path on $n$ vertices. The power graph
$\widehat{P_n}$ consists of two connected components each isomorphic
to the hypercube of dimension $n-1$. For a cycle, $C_n$, the power
graph $\widehat{C_n}$ consists of two connected components each
isomorphic to the projective cube of dimension $n-1$.

In general the following holds.
\begin{lemma} \label{1}
For a graph $G$, an integer $n$ and a Cayley graph $H$ on ${\mathbb
  Z}_2^n$, there exists a homomorphism from $G$ to $H$ if and only if
there exists a homomorphism from $\widehat{G}$ to $H$.
%
\end{lemma}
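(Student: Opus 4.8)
The plan is to prove the two directions separately. The backward direction ($\widehat{G}\to H$ implies $G\to H$) is immediate: the excerpt already records that the natural injection $\iota\colon G\to\widehat{G}$, $v\mapsto\{v\}$, is a homomorphism, so composing any homomorphism $\widehat{G}\to H$ with $\iota$ yields a homomorphism $G\to H$. All the content is in the forward direction.

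For the forward direction, write $H=\Cay(\mathbb{Z}_2^n,\Omega)$ and suppose $\phi\colon V(G)\to\mathbb{Z}_2^n$ is a homomorphism, i.e.\ $uv\in E(G)$ implies $\phi(u)-\phi(v)=\phi(u)+\phi(v)\in\Omega$. I would define $\widehat{\phi}\colon\mathcal{P}(V(G))\to\mathbb{Z}_2^n$ by $\widehat{\phi}(S)=\sum_{v\in S}\phi(v)$, the sum taken in $\mathbb{Z}_2^n$. Identifying $(\mathcal{P}(V(G)),\triangle)$ with the free $\mathbb{Z}_2$-vector space on $V(G)$, this is nothing but the unique linear extension of $\phi$; in particular $\widehat{\phi}$ is a group homomorphism from $(\mathcal{P}(V(G)),\triangle)$ to $(\mathbb{Z}_2^n,+)$. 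To check that it is a graph homomorphism $\widehat{G}\to H$, take an edge $ST$ of $\widehat{G}$; by definition $S\triangle T$ is an edge of $G$, say $S\triangle T=\{u,v\}$ with $uv\in E(G)$. Then $\widehat{\phi}(S)-\widehat{\phi}(T)=\widehat{\phi}(S\triangle T)=\phi(u)+\phi(v)\in\Omega$, so $\widehat{\phi}(S)\widehat{\phi}(T)\in E(H)$, as required.

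There is no serious obstacle here; if anything the point to highlight is that the argument uses the identity $x+x=0$ twice --- once in $\mathcal{P}(V(G))$, so that symmetric difference is simultaneously the addition and the subtraction of the group and the terms shared by $S$ and $T$ cancel out of $\widehat{\phi}(S)+\widehat{\phi}(T)$, and once in $\mathbb{Z}_2^n$, so that $\widehat{\phi}$ sends the group difference $S\triangle T$ to $\widehat{\phi}(S)-\widehat{\phi}(T)$. This is exactly why the statement is restricted to Cayley graphs on binary groups: for a general target group the naive extension would not be edge-preserving. One could also phrase the forward direction via a universal property --- $\widehat{G}$ together with $\iota$ is initial among binary Cayley graphs admitting a homomorphism from $G$ --- but for the lemma itself the explicit map above suffices.
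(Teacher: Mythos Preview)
Your proof is correct and follows essentially the same approach as the paper: both directions match, and the forward direction is handled by extending the graph homomorphism $\phi$ to the unique group homomorphism $\widehat{\phi}$ on the free object (your explicit sum $\widehat{\phi}(S)=\sum_{v\in S}\phi(v)$ is precisely this extension in the binary case) and then checking that edges are preserved. The only difference is one of packaging: the paper proves the statement as a special case of a more general lemma valid for Cayley graphs over any variety $\mathcal{V}$ of groups, replacing $\mathcal{P}(V(G))$ by the free group $\mathcal{F_V}(G)$ and invoking the universal property abstractly, whereas you work concretely with $\mathbb{Z}_2$-linear extensions; the underlying argument is the same.
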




We will prove Lemma~\ref{1} in a much more general form,
encompassing all varieties of groups. Let ${\cal V}$ be
a variety of groups, that is, a class of groups defined
by a set of equations. For instance the variety of abelian groups
is defined by the equation $xy = yx$, and the groups
$\mbox{\Bb Z}_2^n$ are (up to isomorphism) the finite
members of the variety of groups defined by the equation
$x^2 = 1$.

For a graph $G$, we denote by ${\cal F_V}(G)$ the free group
on the vertex set of $G$ in the variety ${\cal V}$, and 
$S_{\cal V}(G)$ the following subset of ${\cal F_V}(G)$:
$$
S_{\cal V}(G) = \{ u^{-1}v : \{u,v\} \in E(G) \}.
$$
The general form of Lemma~\ref{1} is the following.
\begin{lemma} \label{1+}
Let $\Cay(A,S)$ be a Cayley graph, where
$A$ is a group in ${\cal V}$. Then for a graph $G$,
there exists a homomorphism of $G$ to $\Cay(A,S)$
if and only if there exists 
a homomorphism of $\Cay({\cal F_V}(G),S_{\cal V}(G))$ to $\Cay(A,S)$.
\end{lemma}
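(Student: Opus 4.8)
The plan is to exploit the universal property of the free group $\mathcal{F}_{\mathcal V}(G)$ in the variety $\mathcal V$: any set map from $V(G)$ to (the underlying set of) a group $A \in \mathcal V$ extends uniquely to a group homomorphism $\mathcal{F}_{\mathcal V}(G) \to A$. So I would prove the two directions separately, in each case turning a graph homomorphism into a group homomorphism (or vice versa) and checking that edges are respected.

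For the ``only if'' direction, suppose $\varphi : G \to \Cay(A,S)$ is a graph homomorphism. Viewing $\varphi$ as a set map $V(G) \to A$, extend it by freeness to a group homomorphism $\widehat\varphi : \mathcal{F}_{\mathcal V}(G) \to A$. I claim $\widehat\varphi$ is a graph homomorphism $\Cay(\mathcal{F}_{\mathcal V}(G), S_{\mathcal V}(G)) \to \Cay(A,S)$. Indeed, a generic edge of the source joins $x$ and $x\cdot(u^{-1}v)$ for some vertex $x \in \mathcal{F}_{\mathcal V}(G)$ and some $\{u,v\}\in E(G)$; applying $\widehat\varphi$ gives the pair $\widehat\varphi(x)$ and $\widehat\varphi(x)\cdot(\varphi(u)^{-1}\varphi(v))$, and since $\{\varphi(u),\varphi(v)\}\in E(\Cay(A,S))$ we have $\varphi(u)^{-1}\varphi(v)\in S$, so the image pair is an edge (here one should note $S$ is symmetric, so $0\notin S$ and the two endpoints are genuinely distinct/adjacent). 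Thus $\widehat\varphi$ preserves edges.

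For the ``if'' direction, suppose $\psi : \Cay(\mathcal{F}_{\mathcal V}(G), S_{\mathcal V}(G)) \to \Cay(A,S)$ is a graph homomorphism. Observe that $G$ embeds into $\Cay(\mathcal{F}_{\mathcal V}(G), S_{\mathcal V}(G))$ via the map $\iota$ sending each vertex $v$ to the corresponding free generator: if $\{u,v\}\in E(G)$ then $\iota(u)^{-1}\iota(v) = u^{-1}v \in S_{\mathcal V}(G)$, so $\iota$ is a graph homomorphism. Composing, $\psi \circ \iota : G \to \Cay(A,S)$ is the desired homomorphism. Note this composition argument is the reason the statement is clean: one direction is genuinely a composition with a fixed embedding, and the content of the lemma is really the first direction.

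The main obstacle is a subtlety rather than a depth issue: one must be careful that $\iota$ is actually injective on $V(G)$ as a map into the free group, i.e. that distinct generators stay distinct in $\mathcal{F}_{\mathcal V}(G)$, and that adjacent generators give nonzero elements of $S_{\mathcal V}(G)$ — both hold as long as $\mathcal V$ is not the trivial variety, which is the implicit hypothesis (and is certainly true for $\mathcal V$ defined by $x^2=1$, giving $\mathbb{Z}_2^{|V(G)|}$). A second point to verify is that $S_{\mathcal V}(G)$ is a legitimate Cayley connection set, i.e. symmetric and not containing the identity; symmetry is automatic since $\{u,v\}=\{v,u\}$ gives both $u^{-1}v$ and $v^{-1}u$, and the identity is excluded because $G$ is simple. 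Finally, to recover the original Lemma \ref{1}, specialize $\mathcal V$ to the variety defined by $x^2=1$: then $\mathcal{F}_{\mathcal V}(G) = \mathbb{Z}_2^{V(G)} = \mathcal P(V(G))$, the element $u^{-1}v = u+v$ is exactly the symmetric difference $\{u\}\triangle\{v\}$, so $\Cay(\mathcal{F}_{\mathcal V}(G),S_{\mathcal V}(G)) = \widehat G$, and any Cayley graph $H$ on $\mathbb{Z}_2^n$ is of the required form $\Cay(A,S)$.
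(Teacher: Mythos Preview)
Your proof is correct and follows essentially the same approach as the paper: the ``if'' direction is the composition with the natural embedding $\iota: G \hookrightarrow \Cay(\mathcal F_{\mathcal V}(G), S_{\mathcal V}(G))$, and the ``only if'' direction extends a graph homomorphism $\varphi$ to the group homomorphism $\widehat\varphi$ via freeness and checks that $\widehat\varphi(w_1)^{-1}\widehat\varphi(w_2) = \varphi(u)^{-1}\varphi(v) \in S$. Your additional remarks on injectivity of $\iota$, symmetry of $S_{\mathcal V}(G)$, and the specialization to $\widehat G$ are sound and simply make explicit what the paper leaves implicit.
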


\begin{proof}
By definition of $S_{\cal V}(G)$, the inclusion of $V(G)$ in ${\cal
  F_V}(G)$ gives a natural homomorphism from $G$ to $\Cay({\cal
  F_V}(G),S_{\cal V}(G))$.  Therefore, if there exists a homomorphism
of $\Cay({\cal F_V}(G),S_{\cal V}(G))$ to $\Cay(A,S)$, then there
exists a homomorphism from $G$ to $\Cay(A,S)$.

Now suppose that there exists a graph homomorphism $\phi: G \rightarrow
\Cay(A,S)$.  Then $\phi$ extends to a group homomorphism
$\widehat{\phi}: {\cal F_V}(G) \rightarrow A$, and it is easy to see that
$\widehat{\phi}$ is also a graph homomorphism of $\Cay({\cal
  F_V}(G),S_{\cal V}(G))$ to $\Cay(A,S)$.  Indeed, if the set
$\{w_1,w_2\}$ is an edge in $\Cay({\cal F_V}(G),S_{\cal V}(G))$, then
$w_1^{-1}w_2 = u^{-1}v$ for some $\{u,v\} \in E(G)$, whence
$\widehat{\phi}(w_1)^{-1}\widehat{\phi}(w_2) = \phi(u)^{-1}\phi(v)$ which is in $S$.
\end{proof}

Note that when ${\cal V}$ is the variety of all groups, then ${\cal
  F_V}(G)$ is simply the free group on $V(G)$, and Lemma~\ref{1+}
presents $\Cay({\cal F_V}(G),S_{\cal V}(G))$ as the smallest Cayley
graph into which $G$ admits a homomorphism.  By a result of Sabidussi
\cite{sabidussi64} reformulated in~\cite{HahnTardif96}, every
vertex-transitive graph is a retract of a Cayley graph. Therefore
$\Cay({\cal F_V}(G),S_{\cal V}(G))$ is also the smallest
vertex-transitive graph into which $G$ admits a homomorphism.  In
particular, the chromatic number of $\Cay({\cal F_V}(G),S_{\cal
  V}(G))$ is equal to that of $G$; since the chromatic number is
defined in terms of homomorphisms into complete graphs, which are
Cayley graphs.  The fractional chromatic number of $G$ is defined in
terms of homomorphisms to Kneser graphs (see
\cite{ScheinermanUllman97}), which are seldom Cayley graphs (see
\cite{Scapellato96}) but nonetheless vertex-transitive; therefore the
fractional chromatic number of $\Cay({\cal F_V}(G),S_{\cal V}(G))$ is
equal to that of $G$.

When ${\cal V}$ is the variety of abelian groups, then the chromatic
number of the Cayley graph $\Cay({\cal F_V}(G),S_{\cal V}(G))$ is
again equal to that of $G$, since the complete graphs are also Cayley
graphs on abelian groups. However the fractional chromatic number of
$\Cay({\cal F_V}(G),S_{\cal V}(G))$ may be larger than that of $G$.
For instance, it can be shown that the fractional chromatic number of
the Petersen graph $P$ is $\frac{5}{2}$, while that of $\Cay({\cal
  F_V}(P),S_{\cal V}(P))$ is $3$.

Now, the finite groups in the variety ${\cal V}$ defined by the
identity $x^2 = 1$ are all isomorphic to $\mbox{\Bb Z}_2^n$ for some
$n$.  Therefore only the complete graphs whose number of vertices is a
power of $2$ are Cayley graphs on groups in ${\cal V}$, so for an
arbitrary graph $G$, even the chromatic number of $\Cay({\cal
  F_V}(G),S_{\cal V}(G))$ ( which is equal to $\widehat{G}$) may be
larger than that of $G$. In essence, Corollary~\ref{PayanCorollary}
goes a step further than this observation, by stating that the number
$3$ does not even belong to the range of chromatic numbers of Cayley
graphs of groups in ${\cal V}$.

Note that $\widehat{C_n}$ consists in two disjoint copies of
$\mathcal{PC}_{n-1}$. Thus if $C_{2k+1}$ maps to a binary Cayley graph
$G$, then, by Lemma~\ref{1}, the projective cube $\mathcal{PC}_{2k}$
maps to $G$. Furthermore, if $2k+1$ is the length of the shortest
odd-cycle of $G$, then in any mapping of $\mathcal{PC}_{2k}$ to $G$ no
two vertices of $\mathcal{PC}_{2k}$ can be identified. This proves the claim of
Theorem~\ref{PCasSubgraph}.
%


\section{Mapping binary Cayley graphs to projective cubes}
\label{sec:bintopc}

By restating Payan's theorem with the language of homomorphisms, we
obtain Theorem~\ref{PayanCorollary}. This led us to formulate
Conjecture~\ref{MappingToPC2k}, suggesting that what makes 4-coloring
so special is the fact that $\mathcal{PC}_2$ is isomorphic to $K_4$.






In the context of this conjecture, note that since $G$ is not
bipartite it contains an odd-cycle. Let $2r+1$ be the length of a
shortest odd-cycle of $G$. Since $G$ maps to $\mathcal{PC}_{2k}$ and
since the odd-girth of $\mathcal{PC}_{2k}$ is $2k+1$, we have $r\geq
k$. On the other hand Theorem~\ref{PCasSubgraph} tells us that $G$
contains $\mathcal{PC}_{2r}$ as a subgraph. Since $\mathcal{PC}_{2r}$
itself is a binary Cayley graph, Conjecture~\ref{MappingToPC2k} is
equivalent to the following conjecture.

\begin{conjecture}\label{MappingAmongPC2k}
Given $r\geq k$, any mapping of $\mathcal{PC}_{2r}$ to
$\mathcal{PC}_{2k}$ must be onto.
\end{conjecture}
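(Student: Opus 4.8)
The plan is to analyze a homomorphism $\phi:\mathcal{PC}_{2r}\to\mathcal{PC}_{2k}$ through the combinatorial partition model of projective cubes described in the introduction, and to derive a contradiction from the assumption that $\phi$ misses some vertex. First I would recall that by Theorem~1 and Corollary~\ref{ProjectiveCubeCORE}, $\mathcal{PC}_{2k}$ is a core; thus if $r=k$ the map $\phi$ must be an isomorphism and in particular onto, so the content lies in the case $r>k$. For the general case, I would fix the set $\mathcal A$ of size $2r+1$ used to represent vertices of $\mathcal{PC}_{2r}$ as partitions $(A,\bar A)$, and the set $\mathcal B$ of size $2k+1$ for $\mathcal{PC}_{2k}$; the adjacency in each is ``add one element to one side of the partition.'' The goal is to show the image of $\phi$ is all of $\mathcal{PC}_{2k}$.

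The key structural tool I would exploit is the ``every two vertices lie on a common $(2k+1)$-cycle'' property of $\mathcal{PC}_{2k}$ from Theorem~1, together with its analogue for $\mathcal{PC}_{2r}$. Concretely, I would argue as follows. Suppose $\phi$ is not onto, and pick a vertex $y_0\in V(\mathcal{PC}_{2k})$ not in the image. Since $\mathcal{PC}_{2k}$ is vertex-transitive we may assume $y_0=(\emptyset,\mathcal B)$. Now $\mathcal{PC}_{2r}$ contains a copy of $M^k(C_{2k+1})$ by Theorem~\ref{Payan} (indeed $\mathcal{PC}_{2r}$ has odd-girth $2r+1\le\dots$, so one should instead use that $\mathcal{PC}_{2r}$ contains $M^r(C_{2r+1})$ and hence, via the Kneser-graph description, a tower of shorter Mycielskians) -- the precise object to locate inside $\mathcal{PC}_{2r}$ is a subgraph whose image under any homomorphism to $\mathcal{PC}_{2k}$ is forced to contain the apex vertex $(\emptyset,\mathcal B)$. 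The natural candidate is a copy of $M^k(C_{2k+1})$: by Stiebitz's lemma its chromatic number is $4$, so it does not map to $\mathcal{PC}_{2k}$ avoiding a vertex whose deletion leaves a $3$-colorable graph. One then checks that $\mathcal{PC}_{2k}\setminus y_0$ is $3$-colorable (this is where Payan's corollary, Theorem~\ref{PayanCorollary}, or a direct argument on the folded cube minus a vertex enters), which contradicts the existence of $\phi$ restricted to that copy of $M^k(C_{2k+1})$.

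The remaining step is therefore to verify that $\mathcal{PC}_{2r}$ does contain $M^k(C_{2k+1})$ as a subgraph for every $r\ge k$, not merely $M^r(C_{2r+1})$. For this I would use the partition/Kneser description: inside $\mathcal{PC}_{2r}$ the vertices $(A,\bar A)$ with $|A|=r$ induce $K(2r+1,r)$, which contains a $(2r+1)$-cycle, and then the inductive ``common refinement of $v^i_{j-1}$ and $v^i_{j+1}$'' construction from the introduction produces the full Mycielskian tower ending at $(\emptyset,\mathcal A)$; truncating this tower at level $k$ and re-centering (or using that a $(2k+1)$-cycle sits inside $\mathcal{PC}_{2r}$ together with enough room to build $k$ Mycielski levels, which it does since $2r+1\ge 2k+1$) yields the required copy of $M^k(C_{2k+1})$. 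I expect the main obstacle to be precisely this combinatorial embedding of $M^k(C_{2k+1})$ together with the verification that $\mathcal{PC}_{2k}$ minus any single vertex is $3$-chromatic; the latter is plausible for $\mathcal{PC}_{2k}$ viewed as the $(2k+1)$-cube with antipodal pairs identified, but making it rigorous for all $k$ will require care, and it is conceivable that one must instead prove $\mathcal{PC}_{2k}$ minus a vertex maps to $\mathcal{PC}_{2k-2}$ and induct, which would make the whole argument a genuine induction on $k$ rather than a one-shot reduction.
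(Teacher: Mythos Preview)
Your proposal has a fatal flaw in its central step. You aim to embed $M^k(C_{2k+1})$ as a subgraph of $\mathcal{PC}_{2r}$ for $r>k$, and in particular you assert that ``a $(2k+1)$-cycle sits inside $\mathcal{PC}_{2r}$.'' This is false: by Theorem~1.1 the odd-girth of $\mathcal{PC}_{2r}$ is exactly $2r+1$, so for $r>k$ there is no $(2k+1)$-cycle in $\mathcal{PC}_{2r}$ at all, and hence no copy of $M^k(C_{2k+1})$ either, since the latter contains $C_{2k+1}$ at level~$0$. The ``truncation'' idea does not help: truncating the Mycielski tower $M^r(C_{2r+1})$ at level $k$ yields levels whose base is still the $(2r+1)$-cycle (or an independent set if you cut from the top), not a $(2k+1)$-cycle, so you never obtain $M^k(C_{2k+1})$.

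Your second ingredient --- that $\mathcal{PC}_{2k}$ minus a vertex is $3$-chromatic --- is also unproven and far from obvious for general $k$; you yourself flag it as the ``main obstacle.'' Note that if both of your ingredients were available, you would have proved the full conjecture, whereas the paper establishes only the cases $k=1$ (Payan), $r=k$ (Corollary~\ref{ProjectiveCubeCORE}), and $(r,k)=(3,2)$ (Theorem~\ref{thm:main}, via a detailed case analysis of preimage sizes). The general statement remains open in the paper, so a correct short argument along your lines would be a genuine advance; but as it stands the odd-girth obstruction kills the plan at the first step.
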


When $k$ is equal to 1, this conjecture is equivalent to Payan's
theorem and is implied by the fact that $M^k(C_{2k+1})$ is a subgraph
of $\mathcal{PC}_{2k}$ as mentioned in the introduction.  The case
when $k$ is equal to $r$ is also equivalent to stating that
$\mathcal{PC}_{2k}$ is a core as observed by Corollary
\ref{ProjectiveCubeCORE}.  In the next theorem we verify the
conjecture for $k=2$ and $r=3$.  In other words we prove that any
homomorphism of $\mathcal{PC}_6$ into $\mathcal{PC}_4$ should be
surjective. We start with a couple of observations that might be
useful in general case.

\begin{observation}\label{obs:dist2}
 If $f: \mathcal{PC}_{2k+2} \rightarrow \mathcal{PC}_{2k}$ is a
 homomorphism and $f(x)=f(y)$, then $x$ and $y$ have a common
 neighbor, i.e., they are at distance 2.
\end{observation}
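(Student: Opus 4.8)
The plan is to exploit the gap in odd girth: by the cited theorem of Naserasr, $\mathcal{PC}_{2k+2}$ has odd girth $2k+3$ while $\mathcal{PC}_{2k}$ has odd girth $2k+1$, a difference of exactly $2$. I may assume $x\neq y$, as otherwise there is nothing to prove. First I would observe that $x$ is not adjacent to $y$: a homomorphism cannot send an edge to a loop, and $\mathcal{PC}_{2k}$ is loopless, so $f(x)=f(y)$ forces $x\not\sim y$ and hence $d(x,y)\geq 2$ in $\mathcal{PC}_{2k+2}$.

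Next I would apply the last clause of that same theorem, with parameter $k+1$ in place of $k$: the pair $\{x,y\}$ lies on a common cycle $C$ of length $2k+3$ in $\mathcal{PC}_{2k+2}$. The two vertices $x$ and $y$ split $C$ into two internally disjoint paths $P_1$ and $P_2$, of lengths $\ell_1$ and $\ell_2$ with $\ell_1+\ell_2=2k+3$; exactly one of $\ell_1,\ell_2$ is odd, say $\ell_1$.

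Now I would push $C$ forward through $f$. Since $f(x)=f(y)$, the image $f(P_1)$ is a closed walk of odd length $\ell_1$ in $\mathcal{PC}_{2k}$ based at $f(x)$. A graph that carries a closed walk of odd length $\ell_1$ contains an odd cycle of length at most $\ell_1$; as the odd girth of $\mathcal{PC}_{2k}$ is $2k+1$, this yields $\ell_1\geq 2k+1$, and therefore $\ell_2=2k+3-\ell_1\leq 2$. Since $\ell_2$ is even and $P_2$ is a path joining the distinct vertices $x$ and $y$, we must have $\ell_2=2$. Hence $P_2$ is a path $x\,z\,y$, so $z$ is a common neighbour of $x$ and $y$, and together with $d(x,y)\geq 2$ this gives $d(x,y)=2$.

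Once the ``common $(2k+3)$-cycle'' property is available, the argument is essentially forced, so I do not anticipate a genuine obstacle. The only place that needs a word of care is the standard fact that an odd closed walk contains an odd cycle of no greater length — that is precisely what lets the odd-girth lower bound, which a priori applies only to honest cycles, be transferred to the image walk $f(P_1)$.
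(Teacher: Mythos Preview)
Your argument is correct and follows essentially the same route as the paper: place $x$ and $y$ on a common $(2k+3)$-cycle, observe that the odd arc maps under $f$ to a closed odd walk in $\mathcal{PC}_{2k}$, and use the odd-girth bound $2k+1$ to force the even arc to have length exactly~$2$. Your write-up is simply more explicit than the paper's two-line version, in particular in justifying why an odd closed walk yields an odd cycle of no greater length.
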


\begin{proof}
  Vertices $x$ and $y$ belong to a cycle of length $2k+3$ in
  $\mathcal{PC}_{2k+2}$. If they are not at distance 2, then there
  would be a cycle of odd length strictly smaller than $2k+1$ in
  $\mathcal{PC}_{2k}$ which is a contradiction.
\end{proof}

\begin{figure}
  \begin{center}
    \begin{tikzpicture}[line cap=round,line join=round,x=.5cm,y=.5cm]
      \draw (0,0) node [above] {$\emptyset$};
      \fill (0,0) circle (1.5pt);
      \draw (-6,-3) node [left] {$1$};
      \draw (-3,-3) node [left] {$2$};
      \foreach \i in {3,4,...,5}{
        \draw (3*\i - 9,-3) node [right] {$\i$};
      }
      \foreach \i in {1,2,...,5}{
        \fill (3*\i - 9,-3) circle (1.5pt);
        \draw (0,0) -- (3*\i - 9,-3);
        \foreach \j in {1,2,...,4}{
          \draw [dotted] (3*\i - 9,-3) -- (.5*\j + 3*\i - 10.25, -4.5);
        }
      }
      
      \coordinate (centre) at (0,-10.5);
      \foreach \angle in {90,162,18,234,306}{
        \coordinate (a) at ($ (centre) + (\angle:2)$);
        \coordinate (b) at ($ (centre) + (\angle-144:2)$);
        \coordinate (c) at ($ (centre) + (\angle-144:4)$);
        \coordinate (d) at ($ (centre) + (\angle-72:4)$);
        \coordinate (e) at ($ (b) + (\angle - 154:1)$);
        \coordinate (f) at ($ (b) + (\angle - 134:1)$);
        \coordinate (g) at ($ (c) + (\angle - 154:1)$);
        \coordinate (h) at ($ (c) + (\angle - 134:1)$);
        \draw (a) -- (b) -- (c) -- (d);
        \draw [dotted] (b) -- (e);
        \draw [dotted] (b) -- (f);
        \draw [dotted] (c) -- (g);
        \draw [dotted] (c) -- (h);
        \fill (b) circle (1.5pt);
        \fill (c) circle (1.5pt);
      }
      \draw ($ (centre) + (18:2)$) node[below] {$24$}; 
      \draw ($ (centre) + (18:4)$) node[above] {$35$}; 
      \draw ($ (centre) + (90:2)$) node[right] {$34$}; 
      \draw ($ (centre) + (90:4)$) node[right] {$12$}; 
      \draw ($ (centre) + (162:2)$) node[below] {$13$}; 
      \draw ($ (centre) + (162:4)$) node[above] {$45$}; 
      \draw ($ (centre) + (234:2)$) node[right] {$15$}; 
      \draw ($ (centre) + (234:4)$) node[left] {$23$}; 
      \draw ($ (centre) + (306:2)$) node[left] {$25$}; 
      \draw ($ (centre) + (306:4)$) node[right] {$14$}; 
      
    \end{tikzpicture}
  \end{center}
  \caption{A depiction of $\mathcal{PC}_{4}$.}
  \label{fig:pc4}
\end{figure}
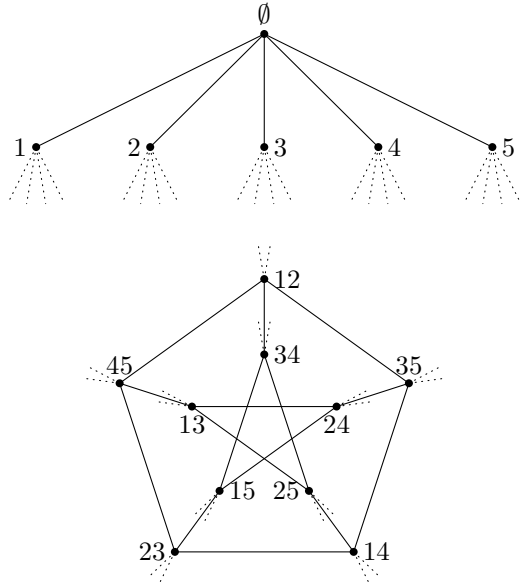

\begin{corollary}\label{preImageOf5}
If $f: \mathcal{PC}_{2k+2} \rightarrow \mathcal{PC}_{2k}$ is a homomorphism and
$|f^{-1}(x)|\geq 5$ for some vertex $x\in V(\mathcal{PC}_{2k})$, then
$f^{-1}(x) \subseteq N(a)$ for some $a\in V(\mathcal{PC}_{2k+2})$.
\end{corollary}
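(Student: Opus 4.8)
The plan is to extract from Observation~\ref{obs:dist2} enough structure on the fiber $T:=f^{-1}(x)$ to reduce the statement to an elementary fact about intersecting families of $2$-element sets. Since $\mathcal{PC}_{2k+2}$ is vertex-transitive, I would first translate so that $0\in T$; write $A=\{e_1,\dots,e_{2k+2},J\}$ for the generating set of $\mathcal{PC}_{2k+2}$, so that $u\sim v$ exactly when $u-v\in A$. By Observation~\ref{obs:dist2}, any two vertices of $T$ lie at distance $2$; in particular each $t\in T\setminus\{0\}$ is a sum $p+q$ of two distinct elements of $A$. A short check on Hamming weights (using $2k+2\ge 4$) shows that the only common neighbours of $0$ and such a $t$ are the two ``midpoints'' $p$ and $q$, and I would record this pair as $P_t\subseteq A$. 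Since $t$ is the sum of the two elements of $P_t$, the map $t\mapsto P_t$ is injective, so $T\setminus\{0\}$ yields at least $|T|-1\ge 4$ distinct $2$-subsets of $A$, with $P_t$ equal to the set of common neighbours of $0$ and $t$.

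The core step I would prove is that $P_{t_1}\cap P_{t_2}\neq\emptyset$ for all $t_1,t_2\in T\setminus\{0\}$. Suppose not. Depending on whether $J\in P_{t_i}$, each $t_i$ equals $e_a+e_b$ (Hamming weight $2$) or $J+e_a$ (Hamming weight $2k+1$), and at most one of $P_{t_1},P_{t_2}$ contains $J$; expanding $t_1+t_2$ in the three remaining cases and using disjointness gives that $t_1+t_2$ has weight $4$ (two vectors of weight $2$) or weight $2k-1$ (one of each). But in $\mathcal{PC}_{2k+2}$ two vertices at distance $2$ differ by a vector of weight $2$ or $2k+1$, and adjacent vertices by a vector of weight $1$ or $2k+2$; neither $4$ nor $2k-1$ lies in $\{1,2,2k+1,2k+2\}$ unless $k=1$, in which case $t_1+t_2$ has weight $1$ or $2k+2$, i.e.\ $t_1\sim t_2$. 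In every case this contradicts the fact that $t_1$ and $t_2$ are at distance exactly $2$ and, in particular, non-adjacent, since $f(t_1)=f(t_2)$. I expect this weight bookkeeping to be the one genuinely computational part of the argument, though it is short.

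It then remains to invoke a Helly-type fact: a family of at least four pairwise intersecting, distinct $2$-element sets has a common element. Indeed, two of its members share exactly one element, say they are $\{a,b\}$ and $\{a,c\}$ with $b\neq c$; if some member omitted $a$ it would have to be $\{b,c\}$ in order to meet both, and then no further $2$-set can meet all of $\{a,b\},\{a,c\},\{b,c\}$, so with at least four members every one of them contains $a$. Applying this to $\{P_t:t\in T\setminus\{0\}\}$ produces a generator $g\in A$ lying in every $P_t$, hence a common neighbour of $0$ and of each $t\in T\setminus\{0\}$; and as $g\in A$ it is also adjacent to $0$. Thus $T\subseteq N(g)$, and undoing the initial translation yields the required vertex $a$ with $f^{-1}(x)\subseteq N(a)$.
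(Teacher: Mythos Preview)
Your argument is correct and follows exactly the paper's strategy: normalize one fiber element to the basepoint, identify the others with pairwise-intersecting $2$-subsets of a $(2k{+}3)$-element set, and apply the sunflower lemma for four or more such subsets. The paper works in the poset model (vertices are subsets of $[2k{+}3]$ modulo complementation), in which the ``pairwise intersecting $2$-subsets'' step is immediate and your Hamming-weight computation becomes unnecessary, but the proofs are otherwise identical.
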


\begin{proof}
 Using the poset notation, and without loss of generality we may
 assume that the vertex associated with the empty set is in
 $f^{-1}(x)$.  Then every other vertex in $f^{-1}(x)$ must be a
 2-subset of $[1 \cdot 2k+3]$. Moreover they must be at distance 2
 from each other, so that each pair of 2-subsets in $f^{-1}(x)$ have a
 non-empty intersection. In order to reach four such 2-subsets, there
 has to be a fixed element (say $i$) in all of them. Let $a$ be the
 vertex associated with the set $\{i\}$ in $\mathcal{PC}_{2k+2}$, we
 then have $f^{-1}(x) \subseteq N(a)$.
\end{proof}

\begin{observation}\label{obs:noPreimageOf6}
  If there exists a homomorphism of $\mathcal{PC}_{6}$ into
  $\mathcal{PC}_{4}$ which is not surjective and such that a vertex of
  $\mathcal{PC}_{4}$ has a pre-image of size $6$, then there exists a
  homomorphism of $\mathcal{PC}_{6}$ into $\mathcal{PC}_{4}$ which is
  not surjective and with no vertex of $\mathcal{PC}_{4}$ being the
  image of $6$ vertices of $\mathcal{PC}_{6}$.
\end{observation}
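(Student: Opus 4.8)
The plan is to isolate a single ``merging'' operation on non-surjective homomorphisms which strictly decreases the number of vertices of $\mathcal{PC}_4$ having a fibre of size exactly $6$, and then to iterate it. So suppose $h\colon \mathcal{PC}_{6}\to\mathcal{PC}_{4}$ is a non-surjective homomorphism with $|h^{-1}(x)|=6$ for some vertex $x$. By Corollary~\ref{preImageOf5}, $h^{-1}(x)\subseteq N(a)$ for some $a\in V(\mathcal{PC}_{6})$; since each vertex of $\mathcal{PC}_{6}$ has degree $7$, there is a unique vertex $b\in N(a)\setminus h^{-1}(x)$. Let $h'$ be the map that agrees with $h$ on $V(\mathcal{PC}_{6})\setminus\{b\}$ and sends $b$ to $x$. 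The goal is to show that $h'$ is again a non-surjective homomorphism, but with one fewer vertex of $\mathcal{PC}_{4}$ having a fibre of size exactly $6$; iterating then terminates, after finitely many steps, at a non-surjective homomorphism with no fibre of size $6$.

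First I would verify that $h'$ is a homomorphism. Since $h$ is one and $h'$ differs from $h$ only at $b$, it suffices to check that $x$ is adjacent to $h(u)$ for every neighbour $u$ of $b$; equivalently, that every neighbour of $b$ is adjacent in $\mathcal{PC}_{6}$ to some vertex of $h^{-1}(x)$. Here I would use the partition model of $\mathcal{PC}_{6}$ on a $7$-element set $\mathcal{A}$ together with the description of large fibres from the proof of Corollary~\ref{preImageOf5}: after a translation we may take $a=\{i\}$, so that $h^{-1}(x)=\{\emptyset\}\cup\{\{i,j\}:j\in\mathcal{A}\setminus\{i,\ell\}\}$ and $b=\{i,\ell\}$. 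The seven neighbours of $b$ are $\{i\}$, $\{\ell\}$, and the five triples $\{i,\ell,r\}$ with $r\in\mathcal{A}\setminus\{i,\ell\}$; and indeed $\{i\}$ is adjacent to every vertex of $h^{-1}(x)$, $\{\ell\}$ is adjacent to $\emptyset\in h^{-1}(x)$, and each $\{i,\ell,r\}$ is adjacent to $\{i,r\}\in h^{-1}(x)$ (using $r\neq i,\ell$). Thus $h(u)$ is adjacent to $x$ for every $u\in N(b)$, so $h'$ is a homomorphism. It is non-surjective because it is obtained from $h$ by recolouring a single vertex with the colour $x$, which already occurs, so the image does not enlarge.

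Finally I would track the fibres. Writing $z=h(b)$, the maps $h$ and $h'$ have the same fibres away from $\{x,z\}$, while $|h'^{-1}(x)|=7$ and $|h'^{-1}(z)|=|h^{-1}(z)|-1$. The main (and essentially only) point requiring care is to show $|h^{-1}(z)|\le 6$, which I expect to be the crux: if instead $|h^{-1}(z)|=7$, then $h^{-1}(z)=N(a')$ for some $a'\in N(b)$, and checking the three cases for $a'$ according to its type among the neighbours of $b$ listed above, each forces $N(a')$ to contain a vertex ($\emptyset$, or some $\{i,r\}$) already lying in $h^{-1}(x)$ --- impossible since $x\neq z$. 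Hence $|h'^{-1}(x)|=7$ and $|h'^{-1}(z)|\le 5$, so neither $x$ nor $z$ is a size-$6$ fibre of $h'$, and no new size-$6$ fibre is created elsewhere; the number of vertices of $\mathcal{PC}_{4}$ with a fibre of size $6$ therefore drops by at least one at each step. Since this number is a nonnegative integer bounded by $|V(\mathcal{PC}_{4})|$, repeating the operation yields in finitely many steps a non-surjective homomorphism $\mathcal{PC}_{6}\to\mathcal{PC}_{4}$ with no vertex having a fibre of size $6$, as required.
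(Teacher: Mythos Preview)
Your proposal is correct and follows essentially the same argument as the paper: redirect the one ``missing'' neighbour $b$ of the common neighbour $a$ to the colour $x$, check this is still a non-surjective homomorphism, rule out that the old colour $z=h(b)$ had a fibre of size $7$ (since any such fibre would be a full neighbourhood $N(a')$ with $a'\in N(b)$, forcing $N(a')\cap h^{-1}(x)\neq\emptyset$), and iterate. The only cosmetic difference is that you inherit the normalisation $a=\{i\}$, $\emptyset\in h^{-1}(x)$ from the proof of Corollary~\ref{preImageOf5}, whereas the paper re-normalises to $a=\emptyset$ so that all seven neighbours are singletons; both choices work equally well.
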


\begin{proof}
  Let $f$ be a homomorphism of $\mathcal{PC}_{6}$ into
  $\mathcal{PC}_{4}$ which is not surjective and such that a vertex
  $x$ of $\mathcal{PC}_{4}$ has a pre-image of size $6$. By Corollary
  \ref{preImageOf5}, there exists a vertex $a$ of $\mathcal{PC}_{6}$
  such that $f^{-1}(x) \subseteq N(a)$. Without loss of generality, we
  may assume that $a$ is the vertex associated with the empty set and
  that $f^{-1}(x)$ is made of the singletons from $\{1\}$ to
  $\{6\}$. Let $y$ be the image of the singleton $\{7\}$. It cannot be
  the image of $7$ vertices (otherwise it would be the whole
  neighborhood of a vertex in $\mathcal{PC}_{6}$ but each of the
  neighbors of $\{7\}$ has one of its neighbors mapped to
  $x$). Therefore, mapping the singleton $\{7\}$ to $x$ does not
  create a new vertex of $\mathcal{PC}_{4}$ being the image of $6$
  vertices of $\mathcal{PC}_{6}$. One can easily check that it is
  still a homomorphism and it remains not surjective. We thus have
  built a homomorphism from $\mathcal{PC}_{6}$ to $\mathcal{PC}_{4}$
  which is not surjective and with strictly less vertices of
  $\mathcal{PC}_{4}$ being the image of exactly $6$ vertices of
  $\mathcal{PC}_{6}$. We may keep doing so until there is no such
  vertex.
\end{proof}

\begin{observation}\label{obs:5gives5}
  Let $f$ be homomorphism of $\mathcal{PC}_{6}$ into
  $\mathcal{PC}_{4}$. If there is a vertex $x$ of $\mathcal{PC}_{4}$
  with a pre-image of size 5 or more, then there is a vertex $y$
  adjacent to $x$ with a pre-image of size 5 or more. Moreover the
  common neighbor of the vertices in the pre-image of $y$ is adjacent
  to the common neighbor of the vertices in the pre-image of $x$.
\end{observation}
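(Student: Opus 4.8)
First I would set everything up in the partition model of the two projective cubes: the vertices of $\mathcal{PC}_4$ are the partitions of a fixed $5$-element set and those of $\mathcal{PC}_6$ the partitions of a fixed $7$-element set $[7]=\{1,\dots,7\}$, with the adjacency described in the introduction. By Corollary~\ref{preImageOf5} there is a vertex $a$ with $f^{-1}(x)\subseteq N(a)$, and since both projective cubes are vertex transitive I may assume $a=\emptyset$ in $\mathcal{PC}_6$ and $x=\emptyset$ in $\mathcal{PC}_4$. Then $N(a)$ is exactly the set of seven singletons of $\mathcal{PC}_6$, so $f^{-1}(x)=\{\{i\}:i\in I\}$ for some $I\subseteq[7]$ with $|I|=|f^{-1}(x)|\ge 5$. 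Since $a$ is adjacent to $\{i\}\in f^{-1}(x)$ for each $i\in I$, the image $f(a)$ is a neighbour of $x$, hence one of the five singletons of $\mathcal{PC}_4$; I name it $s_1$ and let $s_2,\dots,s_5$ denote the other four singletons, so $N(x)=\{s_1,\dots,s_5\}$.

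The core of the argument is a counting step over the ball of radius $2$ around $a$. Every $2$-subset $\{i,j\}$ of $[7]$ with at least one endpoint in $I$, say $i\in I$, is adjacent in $\mathcal{PC}_6$ to $\{i\}\in f^{-1}(x)$, so $f(\{i,j\})$ is a neighbour of $x$ distinct from $x$; that is, $f(\{i,j\})\in\{s_1,\dots,s_5\}$. The number of such $2$-subsets is $\binom{7}{2}-\binom{7-|I|}{2}\ge 20$. I then split according to how many of them land on $s_1=f(a)$. If at least $4$ of them land on $s_1$, put $y=s_1$: then $f^{-1}(y)$ contains those $\ge 4$ $2$-subsets together with $a$ (which is not a $2$-subset), so $|f^{-1}(y)|\ge 5$. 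Otherwise at most $3$ land on $s_1$, leaving $\ge 17$ of the $2$-subsets among $s_2,\dots,s_5$, so one of them, say $s_j$, receives at least $5$; put $y=s_j$, so again $|f^{-1}(y)|\ge 5$. In both cases $y\in N(x)$, so $y\sim x$, and $f^{-1}(y)$ contains at least four of these $2$-subsets.

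Finally I would pin down the common neighbour of $f^{-1}(y)$ and show it is adjacent to $a$. By Observation~\ref{obs:dist2} any two vertices of $f^{-1}(y)$ are at distance $2$ in $\mathcal{PC}_6$, and a short check in the partition model shows that two $2$-subsets of $[7]$ are at distance $2$ in $\mathcal{PC}_6$ exactly when they intersect in one element. Hence the $\ge 4$ distinct $2$-subsets in $f^{-1}(y)$ pairwise meet in one element, and an elementary argument shows that a family of at least four $2$-subsets with this property must be a \emph{sunflower}, sharing some element $m$. They therefore all lie in $N(\{m\})$, where $\{m\}$ is the corresponding singleton of $\mathcal{PC}_6$; the same distance-$2$ analysis shows that any remaining vertex of $f^{-1}(y)$ (necessarily $\emptyset$ or another $2$-subset through $m$) also lies in $N(\{m\})$, and that $\{m\}$ is the \emph{unique} common neighbour of $f^{-1}(y)$. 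Since $\{m\}$ is adjacent to $\emptyset=a$ in $\mathcal{PC}_6$, the ``moreover'' part follows, with $a=\emptyset$ and $b=\{m\}$ playing the role of the two common neighbours.

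The step I expect to be the real obstacle is the boundary case of the pigeonhole: for $|I|=5$ the estimate $5\cdot 4=20$ is tight, so the count by itself need not yield a pre-image of size $5$, and one is forced to exploit that $a$ itself belongs to $f^{-1}(f(a))$. The rest is routine checking in the partition model of $\mathcal{PC}_6$ of three small facts used above: that distance two between two $2$-subsets is equivalent to a one-element intersection, that at least four pairwise singly-intersecting $2$-subsets form a sunflower, and that a vertex adjacent to three distinct $2$-subsets through a common element $m$ must be the singleton $\{m\}$.
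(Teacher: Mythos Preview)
Your argument is correct and follows essentially the paper's approach: pigeonhole the neighbours of $f^{-1}(x)$ among the five neighbours of $x$, then verify that the common neighbour of $f^{-1}(y)$ must be a singleton and hence adjacent to $a=\emptyset$. The paper streamlines your case split by counting $a=\emptyset$ itself among the twenty-one neighbours of $f^{-1}(x)$ (so the single inequality $21>5\cdot4$ already yields some $y$ with $|f^{-1}(y)|\ge 5$), and replaces your sunflower step by the equivalent remark that only singletons have five or more neighbours inside $N(f^{-1}(x))$.
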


\begin{proof}
  With Corollary \ref{preImageOf5}, we may assume that $f^{-1}(x) =
  \left\{ \{1\}, \{2\}, \{3\}, \{4\}, \{5\} \right\}$, the empty set
  being the common neighbor of the pre-image of $x$. This last set
  has twenty-one neighbors in $\mathcal{PC}_{6}$ that must be mapped to the five
  neighbors of $x$. One of these neighbors of $x$, must have a
  pre-image of size 5 or more. Let it be $y$. The only vertices having
  more than five neighbors in $N(f^{-1}(x))$ are the vertices
  associated with singletons. Therefore the common neighbors to the
  vertices of the pre-image of $y$ is a singleton which is adjacent to
  the empty set.
\end{proof}

\begin{theorem}\label{thm:main}
 Any homomorphism of $\mathcal{PC}_{6}$ into $\mathcal{PC}_{4}$ must be onto.
\end{theorem}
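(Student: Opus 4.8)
The plan is to argue by contradiction: suppose $f\colon\mathcal{PC}_6\to\mathcal{PC}_4$ is a homomorphism that is not onto. Since $\mathcal{PC}_6$ has $2^6=64$ vertices and $\mathcal{PC}_4$ has $2^4=16$, and $f$ misses at least one vertex of $\mathcal{PC}_4$, the $64$ vertices are distributed among at most $15$ vertices of $\mathcal{PC}_4$; by pigeonhole some vertex $x$ of $\mathcal{PC}_4$ has $|f^{-1}(x)|\ge\lceil 64/15\rceil=5$. So we are always in the situation covered by Corollary~\ref{preImageOf5} and Observation~\ref{obs:5gives5}. By Observation~\ref{obs:noPreimageOf6} we may furthermore assume no vertex of $\mathcal{PC}_4$ has a pre-image of size exactly $6$ (replacing $f$ by the modified homomorphism if necessary, which stays non-surjective). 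The combination of these two facts is the engine of the argument: every "heavy" vertex $x$ (pre-image of size $\ge 5$, hence exactly $5$) has its pre-image contained in the neighborhood $N(a_x)$ of a uniquely determined vertex $a_x$ of $\mathcal{PC}_6$, and Observation~\ref{obs:5gives5} produces a neighbor $y$ of $x$ that is also heavy, with $a_y$ adjacent to $a_x$.

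Next I would iterate Observation~\ref{obs:5gives5} to build a closed walk of heavy vertices. Starting from the heavy vertex $x$ with $a_x=\emptyset$, we get a heavy neighbor $y_1$ with $a_{y_1}=\{i_1\}$ a singleton; applying the observation again at $y_1$ gives a heavy neighbor $y_2$ of $y_1$ with $a_{y_2}$ adjacent to $a_{y_1}$, i.e. $a_{y_2}$ is either $\emptyset$ or a $2$-set $\{i_1,i_2\}$; and so on. Since $\mathcal{PC}_4$ has only $16$ vertices and odd girth $5$, this walk of heavy vertices must eventually revisit a vertex, closing up into a short cycle in $\mathcal{PC}_4$ all of whose vertices are heavy — in fact one can push this to show the heavy vertices support a $5$-cycle (an odd girth cycle) or that too many vertices are heavy. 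Counting then gives the contradiction: if $m$ vertices of $\mathcal{PC}_4$ are heavy, they already account for $5m$ vertices of $\mathcal{PC}_6$, leaving $64-5m$ vertices spread over the remaining $16-m$ (or $15-m$, since one vertex is missed) non-heavy vertices of $\mathcal{PC}_4$, each of pre-image size $\le 4$; this forces $64-5m\le 4(15-m)$, i.e. $m\le -4$, which is already absurd unless my bookkeeping of "missed" versus "heavy" is off — so the real work is to run this counting carefully while simultaneously tracking which vertices are forced heavy by the chain of Observation~\ref{obs:5gives5} applications.

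More concretely, the heart of the proof is a local analysis around the heavy vertex $x$ with $f^{-1}(x)=\{\{1\},\dots,\{5\}\}$ and common neighbor $\emptyset$. The set $N(\emptyset)$ in $\mathcal{PC}_6$ has $7$ elements (the singletons $\{1\},\dots,\{7\}$); five of them map to $x$, and the remaining two, $\{6\}$ and $\{7\}$, map somewhere. The $21$ vertices at distance $2$ from $\emptyset$ that are the "second coordinates" forcing the heaviness — more precisely, I would examine the neighbors of the five singletons $\{1\},\dots,\{5\}$: each such $\{j\}$ has $7$ neighbors, namely $\emptyset$ and the six $2$-sets $\{j,\ell\}$, $\ell\ne j$. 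All of these must map to $N(x)$, which has only $5$ vertices; analyzing how the $2$-sets $\{j,\ell\}$ distribute over $N(x)$, using the fact that no fiber has size $6$ and using Corollary~\ref{preImageOf5} to locate the common neighbors $a_y$ of heavy neighbors of $x$, should force a rigid configuration that cannot be extended consistently to all of $\mathcal{PC}_6$ — for instance it forces the images of $\{6\}$ and $\{7\}$ and then propagates to a contradiction with the odd girth of $\mathcal{PC}_4$.

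The main obstacle I expect is the case analysis in this local step: pinning down exactly how the neighborhoods of the five heavy-fiber singletons are routed into the $5$-vertex set $N(x)$, ruling out the ways this could be done without creating either a $6$-fiber (excluded) or an inconsistency, and showing that the chain of heavy vertices produced by Observation~\ref{obs:5gives5} must close up in a way that over-counts the $64$ vertices of $\mathcal{PC}_6$. I would organize this by first establishing that the map $a_x\mapsto a_y$ (heavy vertex to the common neighbor of a heavy neighbor) essentially induces a homomorphism-like structure from a subgraph of $\mathcal{PC}_4$ back into $\mathcal{PC}_6$ at the level of these "centers," then derive that the centers $a_x$ range over a set that is too large to sit inside the neighborhoods available, completing the contradiction. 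The bipartite-versus-odd-girth distinction (Theorem of Naserasr 2007) and Corollary~\ref{ProjectiveCubeCORE} will be used to block the degenerate possibilities.
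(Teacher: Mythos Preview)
Your overall architecture is the paper's: assume $f$ is not onto, normalise via Observation~\ref{obs:noPreimageOf6} so that no fibre has size $6$, and use Corollary~\ref{preImageOf5} together with Observation~\ref{obs:5gives5} to attach to each ``heavy'' vertex $x$ a centre $a_x\in V(\mathcal{PC}_6)$, with adjacent heavy vertices getting adjacent centres. Your closing idea --- that $x\mapsto a_x$ gives a homomorphism from a subgraph of $\mathcal{PC}_4$ back into $\mathcal{PC}_6$, and that this should be impossible --- is precisely the mechanism the paper uses to finish one branch of the argument.

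There are, however, genuine gaps. First, Observation~\ref{obs:noPreimageOf6} only removes fibres of size exactly $6$; a fibre of size $7$ (all seven singletons, say) is still possible, so your ``hence exactly $5$'' is unjustified. The paper treats this as a separate Case~1 with its own nontrivial analysis. Second, your counting is miscomputed: $64-5m\le 4(15-m)$ gives $m\ge 4$, not $m\le -4$; it is information, not a contradiction. Third, and most importantly, you have not identified \emph{which} subgraph of $\mathcal{PC}_4$ the centre map is defined on, nor why its image in $\mathcal{PC}_6$ is impossible. The paper pins this down by looking at the ten vertices at distance $2$ from the missed vertex $z$: they induce a Petersen graph, a sharper count ($64-0-5\cdot 3=49$ vertices mapping into ten fibres of size $\le 5$) forces at least nine of them to be heavy, and the centre map then sends a copy of $P^{-}$ (Petersen minus a vertex) into $\mathcal{PC}_6$; since $P^{-}$ contains a $5$-cycle while $\mathcal{PC}_6$ has odd girth $7$, that is the contradiction. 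For this count to go through one first needs every neighbour of $z$ to have fibre size at most $3$, and the paper has to dispose separately of the case where some neighbour of $z$ has fibre size $4$ --- a case your sketch does not anticipate, and which itself splits into two configurations requiring direct structural analysis.
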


\begin{proof} 
For a contradiction, let $f: \mathcal{PC}_{6} \rightarrow
\mathcal{PC}_{4}$ be a homomorphism which is not onto. By Observation
\ref{obs:noPreimageOf6}, we may assume that for every vertex $x$ in
$\mathcal{PC}_{4}$, the size of $f^{-1}(x)$ is not equal to $6$.



We consider two cases:

{\bf Case 1. There is a vertex $\mathbf{x}$ such that
  $\mathbf{|f^{-1}(x)|=7}$.} We may assume that the pre-images of $x$
are exactly the singletons. Then $f$ maps the twenty-one vertices of
size 2 into the five neighbors of $x$, thus there should be a neighbor
$y$ of $x$ which is the image of five such vertices. These five
vertices must share a common element (same arguments as for Corollary
\ref{preImageOf5}). Therefore, we may consider that they are
associated with the sets $\{1,2\}, \{1,3\}, \ldots, \{1,6\}$. By
mapping the empty set and the 2-subset $\{1,7\}$ we still have a
non-surjective homomorphism with no pre-image of size 6 (same
arguments as for Observation \ref{obs:noPreimageOf6}). Therefore, we
may assume that $f^{-1}(x)=N(\emptyset)$ and $f^{-1}(y)=N(\{1\})$.

The remaining 2-subsets (which are the 2-subsets of $[2 \cdot 7]$)
have to be mapped to the four other neighbors of $x$. Among the
3-subsets, the ones containing the element $1$ have to be mapped to
the four other neighbors of $y$. The remaining sets are the 3-subsets
of $[2 \cdot 7]$. In $\mathcal{PC}_{6}$, they induce a matching, each set being
matched to its complement within $[2 \cdot 7]$.

The fifteen 2-subsets of $[2 \cdot 7]$ have to be mapped within the
four neighbors of $x$ which are not $y$. Two such sets can have the
same image only if they share an element. Therefore, the restriction
of $f$ to these vertices induce a coloring of the vertices of
$K(6,2)$. Since $K(6,2)$ is 4-chromatic, the four neighbors of $x$
have a non-empty pre-image. Same argument works for the neighbors of
$y$.


In $\mathcal{PC}_{4}$ there are six vertices which are neither adjacent to $x$
nor to $y$. These six vertices induce a matching in $\mathcal{PC}_{4}$. Each of
the 3-subsets of $[2 \cdot 7]$ has to be mapped simultaneously to a
neighbor of a neighbor of $x$ and a neighbor of a neighbor of
$y$. So these twenty vertices are mapped to the aforementioned six
vertices of $\mathcal{PC}_{4}$. Both sets induce matchings in their respective
graphs, hence if a vertex $a$ is mapped to a vertex $z$, the match of
$a$ has to be mapped to the match of $z$. In other words, if some
vertex $z$ is not in the image of $f$, its match is not either. Since
$f$ is not onto, there must be two such vertices. Thus, all twenty
vertices have to be mapped to four vertices and one of these four
vertices must have a pre-image of size more than 5. By Corollary
\ref{preImageOf5}, its pre-image is included in the neighborhood of
some vertex in $\mathcal{PC}_{6}$. But there is no such 5-tuple among the twenty
considered vertices. This is a contradiction.

%
%

We note that we may actually map the twenty remaining vertices of
$\mathcal{PC}_{6}$ to the six remaining vertices of
$\mathcal{PC}_{4}$, and then obtain a homomorphism of
$\mathcal{PC}_{6}$ into $\mathcal{PC}_{4}$.

{\bf Case 2. For every vertex $\mathbf{x}$ of
  $\mathbf{\mathcal{PC}_{4}}$, $\mathbf{|f^{-1}(x)| \leq 5}$.} In this
case we first note that if $|f^{-1}(x)|=5$ then all five neighbors of $x$
must be in the image of $f$. Otherwise, the twenty-one neighbors of
$f^{-1}(x)$ are mapped to only four vertices and therefore we have a
neighbor $z$ of $x$ with $|f^{-1}(z)| \geq 6$.

Since $f$ is not onto, there is a vertex $z$ in $\mathcal{PC}_{4}$ with an empty
pre-image. Then every neighbor is the image of at most four vertices
from $\mathcal{PC}_{6}$.

{\bf Case 2.1} Suppose there is a neighbor $t$ of $z$ which has a
pre-image of size 4.

Without loss of generality, and using Observation \ref{obs:dist2} and
symmetry arguments, either
$f^{-1}(t)=\left\{\{1\},\{2\},\{3\},\{4\}\right\}$ or
$f^{-1}(t)=\left\{\emptyset,\{1,2\},\{2,3\},\{1,3\}\right\}$.


{\bf Case 2.1.1} If $f^{-1}(t)=\{\emptyset, \{1,2\}, \{1,3\},
\{2,3\}\}$.  Then there are twenty vertices in $N(f^{-1}(t))$ and they
must map to four vertices only. So $N(f^{-1}(t))$ should be
partitioned into four sets of size 5, each part being vertices with a
common neighbor in $\mathcal{PC}_{6}$. But the only vertices having
five neighbors in $N(f^{-1}(t))$ are the vertices associated with the
empty set, $\{1,2\}$, $\{1,3\}$, and $\{2,3\}$. Thus they should be
the center of such partitions, we then denote the corresponding parts
by $P_{\emptyset}, P_{\{1,2\}}, P_{\{1,3\}}$ and
$P_{\{2,3\}}$. Private neighborhoods give us that vertices
$\{4\},\{5\},\{6\}$ and $\{7\}$ are in $P_{\emptyset}$, vertices
$\{1,2,4\},\{1,2,5\},\{1,2,6\}$ and $\{1,2,7\}$ are in $P_{\{1,2\}}$,
vertices $\{1,3,4\},\{1,3,5\},\{1,3,6\}$ and $\{1,3,7\}$ are in
$P_{\{1,3\}}$, and finally vertices $\{2,3,4\},\{2,3,5\},\{2,3,6\}$
and $\{2,3,7\}$ are in $P_{\{2,3\}}$. Moreover, each set then contains
exactly one of the four other vertices in $N(f^{-1}(t))$, i.e.,
$\{1\}, \{2\}, \{3\}, \{1,2,3\}$.

Suppose $x$ is the image of five vertices of $P_{\emptyset}$. By
Observation \ref{obs:5gives5}, there must be a neighbor $y$ of $x$ in
$\mathcal{PC}_{4}$ and a neighbor $a$ of the empty set in
$\mathcal{PC}_{6}$ such that five of the seven neighbors of $a$ are
mapped into $y$, let $N'(a)$ be these five vertices. Note that for
each $b$ in $\{1,2,3\}$, three of the neighbors of $\{b\}$ are already
mapped into $t$, so $a$ cannot be a singleton included in
$\{1,2,3\}$. We may then assume without loss of generality that $a$ is
the singleton $\{4\}$. Then we observe that for any choice of $N'(a)$,
this set $N'(a)$ will have a neighbor in each of the sets
$P_{\emptyset}, P_{\{1,2\}}, P_{\{1,3\}}, P_{\{2,3\}}$. Therefore
vertices $t, y, f(P_{\emptyset}), f(P_{\{1,2\}}), f(P_{\{1,3\}})$ and
$f(P_{\{2,3\}})$ would induce a $K_{2,4}$ in $\mathcal{PC}_{4}$ which
is a contradiction.


{\bf Case 2.1.2} If $f^{-1}(t)=\{\{1\}, \{2\}, \{3\} \{4\}\}$. The set
$f^{-1}(t)$ has nineteen neighbors in $\mathcal{PC}_{6}$ and they should map, by $f$,
to only four neighbors of $t$ in $\mathcal{PC}_{4}$. Thus the neighborhood of
$f^{-1}(t)$ is partitioned into four sets, three of which are of size
5 and the last one of size 4.  The ones of the size 5 must be common
neighbors of a vertex in $\mathcal{PC}_{6}$ and the central vertex itself must
be of the form $\{i\}$, but only one such $i$ can be in $\{5, 6,
7\}$. So without loss of generality we may assume that the first two
parts of size 5 are subsets of $N(\{1\})$ and $N(\{2\})$. Furthermore
since $\{1,2\}$ can only be in one of these two parts, we assume it is
not in the first one. Thus the first part is precisely
$P=\{\{1,3\},\{1,4\},\{1,5\},\{1,6\},\{1,7\}\}$. Let $x$ be the image
of $P$. Then each neighbor of $\emptyset$ and $\{1, 2\}$ except
$\{2\}$ is also a neighbor of a vertex in $P$.  Furthermore,
$f(\{2\})=t$ is also adjacent to $x$. Then if we change $f$ only in
these place, namely defining $f'(\emptyset)=f'(\{1,2\})=v$ and
$f'(a)=f(a)$ otherwise, we will have a have new homomorphism, $f'$,
whose image is a subset of the image of $f$. This new homomorphism
$f'$ would have a vertex with a pre-image of size $7$. But by the Case
1, it is impossible.

{\bf Case 2.2} We finally focus our attention on the case where every
neighbor of $z$ is the image of at most three vertices of
$\mathcal{PC}_{6}$. We remind the reader that we are under the
assumption that pre-image of each vertex has size at most 5. With all
these assumptions we prove the following claim:

\begin{claim}\label{claim:laclaim}
If vertices $x$ and $y$ of $\mathcal{PC}_{4}$ are such that
$|f^{-1}(x)|=|f^{-1}(y)|=5$ and $x$ is adjacent to $y$, then
$f^{-1}(x) \subset N(a)$ and $f^{-1}(x) \subset N(b)$ for some vertices
$a$ and $b$ of $\mathcal{PC}_{6}$ which are adjacent.
\end{claim}

Let $a$ be the common neighbor of the vertices in $f^{-1}(x)$. Note
that $z$ and $x$ are not adjacent and, therefore, have two common
neighbors.  Each of these two common neighbors is the image of at most
three vertices of $\mathcal{PC}_{6}$.  Thus the twenty-one vertices of
$N(f^{-1}(x))$ must be partitioned into five sets three of which are
of size 5 and the other two of size exactly 3. Then $y$ must be the
image of one of the parts of size 5 but these five elements can only
be a common neighbor of vertex $b$ at distance 1 from $a$. This
concludes the proof of Claim \ref{claim:laclaim}.

Having this observed note that there is no vertex mapped to $z$ and
each neighbor of $z$ is the image of at most three vertices, thus at
least forty-nine vertices are mapped to the vertices at distance 2
from $z$.  And, therefore, at least nine of them are the image of five
vertices. These nine vertices induce a subgraph isomorphic to $P^-$,
that is the Petersen graph minus a vertex. Now we consider a mapping
$g$ of $P^-$ which sends each of these nine vertices to the center of
their pre-images under $f$. By Claim \ref{claim:laclaim}, this is a
homomorphism of $P^-$ into $\mathcal{PC}_{6}$. But $P^-$ contains a
$C_5$ while $\mathcal{PC}_{6}$ has odd-girth 7. This contradiction
concludes the proof of Theorem \ref{thm:main}.
\end{proof}

From these results, one can derive the following Corollary.

\begin{corollary}
 Let $G$ be a binary Cayley graph of odd-girth $7$. If $G$ admits a
 homomorphism to $\mathcal{PC}_{4}$, then any such mapping must be onto.
\end{corollary}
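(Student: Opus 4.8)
The plan is to deduce this corollary directly from Theorem~\ref{thm:main} together with Theorem~\ref{PCasSubgraph}, following exactly the reduction that makes Conjecture~\ref{MappingToPC2k} equivalent to Conjecture~\ref{MappingAmongPC2k}; in fact the corollary is nothing but the unconditional $r=3$, $k=2$ instance of that equivalence.

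First I would record the structural input: since $G$ has odd-girth $7$ it is non-bipartite and contains an odd cycle of length $7$, so Theorem~\ref{PCasSubgraph} applies with $2k+1=7$, i.e. $k=3$, and yields a copy of $\mathcal{PC}_6$ as a subgraph of $G$. Next, let $f\colon G\to\mathcal{PC}_4$ be an arbitrary homomorphism and consider its restriction $g=f|_{V(\mathcal{PC}_6)}$ to that embedded $\mathcal{PC}_6$. A restriction of a homomorphism to any subgraph is again a homomorphism, so $g$ is a homomorphism $\mathcal{PC}_6\to\mathcal{PC}_4$. By Theorem~\ref{thm:main}, $g$ is onto, and hence $V(\mathcal{PC}_4)=g(V(\mathcal{PC}_6))\subseteq f(V(G))$, so $f$ is onto as well.

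As for the main obstacle: essentially all the work is already contained in Theorem~\ref{thm:main} and Theorem~\ref{PCasSubgraph}, so the proof is short. The only point that needs care is that odd-girth \emph{exactly} $7$ is what guarantees the embedded $\mathcal{PC}_6$ (smaller odd-girth would place us in the already-known $\mathcal{PC}_2$ or $\mathcal{PC}_4$ situations rather than the present one), and that surjectivity on a subgraph forces surjectivity of the whole map — both of which are immediate.
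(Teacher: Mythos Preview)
Your argument is correct and is precisely the intended derivation: the paper does not spell out a proof but simply says ``From these results, one can derive the following Corollary,'' referring to Theorem~\ref{PCasSubgraph} (to embed $\mathcal{PC}_6$ in $G$) and Theorem~\ref{thm:main} (to force the restriction, hence $f$, to be surjective). Your write-up matches this exactly.
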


\section{Concluding remarks}

Conjecture~\ref{MappingAmongPC2k} can be strengthened in two steps
each of which may give a new idea for proving it. The first
strengthening is based on the following notation:

Given a graph $G$ and a positive integer $l$ we define the
\emph{$l$-th walk power} of $G$, denoted $G^{(l)}$ to be a graph with
vertices set of $G$ as its vertices where two vertices $x$ and $y$
being adjacent if there is a walk of length $l$ connecting $x$ and $y$
in $G$. It follows from this definition that if $\varphi$ is a
homomorphism of $G$ to $H$, then $\varphi$ is also a homomorphism of
$G^{(l)}$ to $H^{(l)}$. Since $\mathcal{PC}_{2k}^{(2k-1)}$ is isomorphic to
$K_{2^{2k}}$, Conjecture~\ref{MappingAmongPC2k} would be implied by
the following conjecture:

\begin{conjecture}
 For $r\geq k$ we have $\chi(\mathcal{PC}_{2r}^{(2k-1)})\geq 2^{2k}$.
\end{conjecture}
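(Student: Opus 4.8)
The plan is to first turn $\mathcal{PC}_{2r}^{(2k-1)}$ into a concrete Cayley graph, then observe that the usual lower bounds on $\chi$ are provably too weak, and finally set up the topological argument that the problem seems to require. Since $\mathcal{PC}_{2r}=\Cay(\mathbb{Z}_2^{2r},\{e_1,\dots,e_{2r},J\})$, a sum of $2k-1$ generators is a vector of odd weight at most $2k-1$ when $J$ is used an even number of times, and the complement of such a vector when $J$ is used an odd number of times; hence $\mathcal{PC}_{2r}^{(2k-1)}=\Cay(\mathbb{Z}_2^{2r},S)$ with
$$S=\{x:\ \mathrm{wt}(x)\ \text{odd},\ \mathrm{wt}(x)\le 2k-1\}\ \cup\ \{x:\ \mathrm{wt}(x)\ \text{even},\ \mathrm{wt}(x)\ge 2r-2k+2\}.$$
For $r=k$ this is $\mathbb{Z}_2^{2k}\setminus\{0\}$, i.e.\ $K_{2^{2k}}$, which is the base case; and for fixed $r$ the set $S$ only grows with $k$, so $\chi(\mathcal{PC}_{2r}^{(2k-1)})$ is nondecreasing in $k$. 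Note also that the subgraph keeping only the odd part of $S$ is bipartite, while the subgraph keeping only the even part splits into two Cayley graphs on $\mathbb{Z}_2^{2r-1}$ whose chromatic number already falls short of $2^{2k}$ once $r>k$ — so a successful proof must use the two parts together.

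Next I would check that the classical bounds cannot deliver $2^{2k}$. The graph is vertex-transitive, hence $\chi_f=2^{2r}/\alpha$; but $\{0\}\cup\{e_1+e_j:2\le j\le 2r\}$ is an independent set (all pairwise differences have weight $2$, which lies outside $S$ once $r>k$), so $\alpha\ge 2r$ and $\chi_f\le 2^{2r}/2r$, which for $r=k+1$ is already smaller than $2^{2k}$ as soon as $k\ge 2$. The Hoffman ratio bound also fails: the eigenvalues of $\Cay(\mathbb{Z}_2^{2r},S)$ are the Krawtchouk sums $\lambda_w=\sum_j K_j(w)$, where $K_j$ is the $j$-th binary Krawtchouk polynomial and $j$ ranges over the weights present in $S$, and for $\mathcal{PC}_6^{(3)}$ one computes $\lambda_{\max}=42$, $\lambda_{\min}=-10$, hence only $\chi\ge 6$. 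So the conjecture genuinely lives in the regime $\chi\gg\chi_f$, like Kneser graphs, and a topological argument is needed.

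For that argument I would exploit the folded-cube origin of $\mathcal{PC}_{2r}$: it is the quotient of the bipartite graph $Q_{2r+1}$ by the free antipodal $\mathbb{Z}_2$-action that swaps the two color classes. Attaching to $G:=\mathcal{PC}_{2r}^{(2k-1)}$ its box complex $\mathrm{B}(G)$ (see~\cite{Matousek2003}), the inequality $\chi(G)\ge\mathrm{ind}_{\mathbb{Z}_2}(\mathrm{B}(G))+2$ reduces the conjecture to showing $\mathrm{ind}_{\mathbb{Z}_2}\big(\mathrm{B}(\mathcal{PC}_{2r}^{(2k-1)})\big)\ge 2^{2k}-2$; this is sharp at $r=k$, where $\mathrm{B}(K_{2^{2k}})\simeq S^{2^{2k}-2}$. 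The real content is to control how the walk-power operation acts on the box complex: passing to $\mathcal{PC}_{2r}^{(2k-1)}$ makes pairs of layers at Hamming distance about $2k-1$ (and about $2r$) adjacent, and one would try to show that this produces just enough new simplices to push the index up to $2^{2k}-2$, independently of $r$. A more combinatorial packaging of the same goal is to locate inside $\mathcal{PC}_{2r}^{(2k-1)}$ a subgraph of chromatic number $2^{2k}$ generalizing the two facts recalled in the introduction — that the middle layer of $\mathcal{PC}_{2r}$ carries a Kneser graph, and that $M^k(C_{2k+1})\subseteq\mathcal{PC}_{2k}$ — with the new edges between distant layers doing the work.

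The hard part is exactly this exponential-in-$k$ bound: each elementary invariant ($\alpha$, $\omega$, the spectrum) is short by a factor that is polynomial in $k$, so the proof has to capture the full integrality gap, and, unlike the Kneser case, there is no off-the-shelf topological model for a walk power of a folded cube. A reasonable intermediate target is the first open case, $k=2$: prove $\chi(\mathcal{PC}_{2r}^{(3)})\ge 16$ for all $r\ge 2$, starting with the concrete $64$-vertex graph $\mathcal{PC}_6^{(3)}$; the monotonicity in $k$ noted in the first step then upgrades this to all larger powers in the same dimension.
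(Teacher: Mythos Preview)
The statement in question is a \emph{conjecture}; the paper does not prove it. Immediately after stating it, the authors only remark that ``the methods of algebraic topology used for graph coloring are the best tools to prove this conjecture'' and then pose a yet stronger conjecture on the connectivity of the associated complex. Your proposal is, correspondingly, not a proof either, and you say so: the key step---bounding the $\mathbb{Z}_2$-index of the box complex of $\mathcal{PC}_{2r}^{(2k-1)}$ by $2^{2k}-2$---is left as the ``real content'' and the ``hard part''. So there is nothing to compare at the level of proofs; both the paper and you are pointing at the same topological target.

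What you add beyond the paper is correct and useful groundwork. The description of $\mathcal{PC}_{2r}^{(2k-1)}$ as $\Cay(\mathbb{Z}_2^{2r},S)$ with $S$ the odd-weight vectors of weight at most $2k-1$ together with the even-weight vectors of weight at least $2r-2k+2$ is right, as are the base case $r=k$ giving $K_{2^{2k}}$ and the monotonicity in $k$. Your argument that the fractional chromatic number and the Hoffman bound both undershoot $2^{2k}$ (via the independent set $\{0\}\cup\{e_1+e_j\}$ of size $2r$, and via the spectrum in the case $r=3$, $k=2$) is also correct, and it justifies the paper's instinct that a Kneser-type topological argument is unavoidable. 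Your box-complex formulation is essentially a restatement of the paper's follow-up conjecture.

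One small slip in your last sentence: proving $\chi(\mathcal{PC}_{2r}^{(3)})\ge 16$ for all $r\ge 2$ and invoking monotonicity in $k$ only yields $\chi(\mathcal{PC}_{2r}^{(2k-1)})\ge 16$ for $k\ge 2$, not $2^{2k}$; so the $k=2$ case is a natural first target but does not bootstrap to larger $k$.
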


It seems then that the methods of algebraic topology used for graph
coloring are the best tools to prove this conjecture. To this end we
suggest the following stronger conjecture, we refer to
\cite{Matousek2003} for definitions and details required for this
conjecture.

\begin{conjecture}
 For $r\geq k$ the simplicial complex associated to $\mathcal{PC}_{2r}^{(2k-1)}$ is
 $2^{2k}$ connected.
\end{conjecture}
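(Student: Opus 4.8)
The plan is to derive the asserted connectivity from the explicit Cayley structure of $\mathcal{PC}_{2r}^{(2k-1)}$ and then feed it into the Lov\'asz--type lower bound, following the program flagged by the reference to \cite{Matousek2003}. I would work with the neighborhood complex $\mathcal{N}(G)$ (or, for the equivariant refinements below, the box complex), for which $\chi(G)\ge \mathrm{conn}(\mathcal{N}(G))+3$. The base case $r=k$ is the calibration point: there $\mathcal{PC}_{2k}^{(2k-1)}\cong K_{2^{2k}}$, so $\mathcal{N}(G)$ is the boundary of a simplex, that is the sphere $S^{2^{2k}-2}$, whose connectivity is exactly what is needed to recover $\chi(K_{2^{2k}})=2^{2k}$. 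The goal is to show that for every $r\ge k$ the associated complex is at least as connected as this base sphere.

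First I would make the graph explicit. In $\mathcal{PC}_{2r}$ the generators obey the single relation $e_1+\cdots+e_{2r}+J=0$, so a vector $v$ of Hamming weight $w$ has exactly two minimal generator--expressions, of lengths $w$ and $2r+1-w$; these have opposite parity because $2r+1$ is odd, and inserting $e_i+e_i$ or $J+J$ changes a length by $2$. Hence $v$ is adjacent to $0$ in $\mathcal{PC}_{2r}^{(2k-1)}$ exactly when the odd member of $\{w,\,2r+1-w\}$ is at most $2k-1$, i.e.\ when $w$ is odd with $w\le 2k-1$ or $w$ is even with $w\ge 2r-2k+2$. Writing $\Omega'$ for this connection set, the map $v\mapsto v+J$ is a graph automorphism interchanging the two weight--bands of $\Omega'$; this is the natural free $\mathbb{Z}_2$--action to use for a Borsuk--Ulam argument. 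The neighborhood complex is then the complex whose faces are the subsets contained in some translate $u+\Omega'$.

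From here I would pursue two complementary routes. The structural route is an induction on $r$: one seeks a map relating dimension $2r+2$ to $2r$ that does not lower connectivity, for instance using the walk--power functoriality (a homomorphism $G\to H$ induces $G^{(l)}\to H^{(l)}$) together with a suitable embedding of coordinate subspaces, so that the connectivity of the sphere from the base case propagates upward. The topological route is to compute the homotopy type of $\mathcal{N}(G)$ directly by a discrete Morse matching or shelling adapted to the two weight--bands of $\Omega'$, aiming to show that $\mathcal{N}(G)$ is homotopy equivalent to a wedge of spheres of dimension at least $2^{2k}-2$, or, equivariantly, to exhibit a $\mathbb{Z}_2$--map from a sphere of the appropriate dimension into the box complex via the complementation action and to bound its index from below.

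The main obstacle, I expect, is precisely the uniform control of connectivity as $r$ grows. Unlike the complete graph $K_{2^{2k}}$, whose neighborhood complex deformation--retracts onto a single sphere, the complex of $\mathcal{PC}_{2r}^{(2k-1)}$ is generated by translates of a connection set that is a union of two weight--shells whose combinatorics depends on both $r$ and $k$, and there is no evident retraction to a sphere. Moreover the available equivariant tools bound the $\mathbb{Z}_2$--index of the box complex more readily than they certify high homotopy connectivity, and the index bound alone would only reprove the chromatic inequality rather than the stronger connectivity statement. The decisive step will therefore be to produce either a Morse matching whose critical cells all lie in high dimension, or an equivariant comparison to the base case that is controlled uniformly in $r$; making either argument work across the whole range $r\ge k$ is where the real difficulty lies.
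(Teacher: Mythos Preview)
The statement you are attempting to prove is presented in the paper as an open \emph{conjecture}, not as a theorem: it appears in the concluding remarks, is introduced with ``we suggest the following stronger conjecture'', and the paper offers no proof of it whatsoever. There is therefore no argument in the paper against which your proposal can be compared.

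As for the proposal itself, it is a research outline rather than a proof, and you say as much: both the inductive route and the discrete-Morse/equivariant route are described only as strategies, and you explicitly identify the decisive step (uniform control of connectivity as $r$ grows, or a Morse matching with critical cells only in high dimension) as the unresolved difficulty. That is an honest assessment, but it means nothing here closes the gap between plan and proof. Two further remarks. First, the base-case calibration shows that the conjecture as literally stated (``$2^{2k}$ connected'') cannot be sharp: the neighborhood complex of $K_{2^{2k}}$ is $S^{2^{2k}-2}$, which is $(2^{2k}-3)$-connected, so any precise version must be adjusted by the usual additive constant before the Lov\'asz bound $\chi(G)\ge\mathrm{conn}(\mathcal{N}(G))+3$ yields $2^{2k}$. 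Second, the ``structural route'' via walk-power functoriality goes in the wrong direction for what you need: a homomorphism $\mathcal{PC}_{2r}\to\mathcal{PC}_{2k}$ would transport connectivity bounds \emph{from} the target \emph{to} the source only if you already knew the target complex is highly connected and had a suitable map of complexes going the other way; embeddings of coordinate subspaces give maps $\mathcal{PC}_{2k}\hookrightarrow\mathcal{PC}_{2r}$ at the level of groups but not obviously graph homomorphisms between the walk powers, so this step needs a concrete construction before it can carry any weight.
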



Finally, for odd values of $k$ the projective cube $\mathcal{PC}_{k}$
is a bipartite graph and homomorphism problems to or among these
graphs are trivial. However the theory becomes more complicated under
the notion of signed graph homomorphisms and signed projective cubes
as studied in~\cite{NRS13}. Analogue of this work for the case of
singed projective cubes is under development.

\bibliographystyle{plain}
\bibliography{mybib}

\end{document}